\DeclareMathOperator\Char{Char}%
\DeclareMathOperator\Cl{Cl}%
\DeclareMathOperator\id{id}%
\DeclareMathOperator\Lip{Lip}%
\DeclareMathOperator\GL{GL}%
\DeclareMathOperator\GO{GO}%
\DeclareMathOperator\I{I}%
\DeclareMathOperator\Orth{O}%
\DeclareMathOperator\PO{PO}%
\newcommand{\x}{\times} 
\newcommand{\bPV}{\bP(\vV)} 
\newcommand{\bPVQ}{\bP(\vV,Q)} 
\newcommand{\kl}[1]{^{\leq #1}}
\newcommand{\ClVQ}{\Cl(\vV,Q)}
\newcommand{\VQ}{(\vV,Q)}
\newcommand{\VQtilde}{(\tilde\vV,\tilde{Q})}
\newcommand{\Oweak}{\Orth'}%
\newcommand{\Iweak}{\I'}%
\newcommand{\POweak}{\PO'}%
\renewcommand{\phi}{\varphi}
\renewcommand{\kappa}{\varkappa}
\newcommand{\cF}{{\mathcal F}}
\newcommand{\cG}{{\mathcal G}}
\newcommand{\cH}{{\mathcal H}}
\newcommand{\cM}{{\mathcal M}}
\newcommand{\cS}{{\mathcal S}}
\newcommand{\bC}{{\mathbb C}}
\newcommand{\bN}{{\mathbb N}}
\newcommand{\bP}{{\mathbb P}}
\newcommand{\bR}{{\mathbb R}}
\newcommand{\bZ}{{\mathbb Z}}
\newcommand{\vH}{{\bm H}}
\newcommand{\vL}{{\bm L}}
\newcommand{\vS}{{\bm S}}
\newcommand{\vT}{{\bm T}}
\newcommand{\vV}{{\bm V}}
\newcommand{\vX}{{\bm X}}
\newcommand{\va}{{\bm a}}
\newcommand{\vb}{{\bm b}}
\newcommand{\ve}{{\bm e}}
\newcommand{\vg}{{\bm g}}
\newcommand{\vh}{{\bm h}}
\newcommand{\vi}{{\bm i}}
\newcommand{\vm}{{\bm m}}
\newcommand{\vn}{{\bm n}}
\newcommand{\vp}{{\bm p}}
\newcommand{\vq}{{\bm q}}
\newcommand{\vr}{{\bm r}}
\newcommand{\vs}{{\bm s}}
\newcommand{\vt}{{\bm t}}
\newcommand{\vx}{{\bm x}}
\newcommand{\vy}{{\bm y}}
\newcommand{\vz}{{\bm z}}
\newtheorem{thm}{Theorem}[section]
\newtheorem{cor}[thm]{Corollary}
\newtheorem{lem}[thm]{Lemma}
\theoremstyle{definition}
\theoremstyle{remark}
\newtheorem{rem}[thm]{Remark}
\newtheorem{exa}[thm]{Example}
\begin{document}

\author{Hans Havlicek}
\title{Projective metric geometry\\ and Clifford algebras%
\thanks{Added December 2021: Owing to typographical errors by the author, the conditions on ``$\dim\vV$''
appearing in the published version, formulas (2) and (3), fail to match with
the cited sources. In the present text these mistakes have been remedied.}}
\date{}

\maketitle

\begin{center}
\textit{Dedicated to the memory of Heinrich Wefelscheid}
\end{center}

\begin{abstract}
Each vector space that is endowed with a quadratic form determines its Clifford
algebra. This algebra, in turn, contains a distinguished group, known as the
Lipschitz group. We show that only a quotient of this group remains meaningful
in the context of projective metric geometry. This quotient of the Lipschitz
group can be viewed as a point set in the projective space on the Clifford
algebra and, under certain restrictions, leads to an algebraic description of
so-called kinematic mappings.
\par\noindent
\textbf{Mathematics Subject Classification (2020):}  51F25 15A66 51F15\\
\textbf{Key words:} projective metric space, Clifford algebra, Lipschitz
monoid, Lipschitz group, kinematic mapping
\end{abstract}

\section{Introduction}\label{se:intro}

By a metric vector space we mean a finite dimensional vector space $\vV$ (over
a field $F$ of arbitrary characteristic) that is endowed with a quadratic form
$Q$. The description of orthogonal transformations of a metric vector space
$\VQ$ in terms of its associated Clifford algebra $\ClVQ$ has a long history.
We follow the exposition by E.~M.~Schr\"{o}der~\cite{schroe-95a} and provide in
Section~\ref{se:metric-vs} basic facts about a metric vector space $\VQ$ and
its weak orthogonal group $\Oweak\VQ$, which in most cases is generated by
reflections. In Section~\ref{se:clifford}, we collect from various sources
those results about the Clifford algebra $\ClVQ$ which are needed later on.
Section~\ref{se:lipschitz} is based on the work of J.~Helmstetter as summarised
in \cite{helm-12a}. We recall from there the Lipschitz monoid $\Lip\VQ$ and the
twisted adjoint representation of the Lipschitz group $\Lip^\times\VQ$, which
provides a surjective homomorphism onto the weak orthogonal group $\Oweak\VQ$.
\par
The main goal of the present note is the interpretation of the Lipschitz group
$\Lip^\times\VQ$ in projective terms, that is, we consider the projective
metric space $\bP\VQ$ and the projective space on the associated Clifford
algebra $\ClVQ$. Thereby one is immediately facing the following problem: if
the quadratic form $Q$ is replaced by a non-zero multiple, say $cQ$ with $c\in
F\setminus\{0\}$, then this does not affect the geometry of $\bP\VQ$, but the
Clifford algebras $\ClVQ$ and $\Cl(\vV,cQ)$ need not be isomorphic. Therefore,
the usage of Clifford algebras in projective metric geometry at a first sight
appears to be problematic.
\par
We start Section~\ref{se:proj-metric} by introducing in $\bP\bigl(\ClVQ\bigr)$
point sets $\cM\VQ$ and $\cG\VQ$ that arise from a quotient of the Lipschitz
monoid $\Lip\VQ$ and a quotient of the Lipschitz group $\Lip^\times\VQ$. The
latter set can be made into a group in a natural way and as such it acts on the
initial projective metric space $\bP\VQ$. In Theorems \ref{thm:iso.0},
\ref{thm:iso.1} and \ref{thm:iso.2} we carry out a detailed study of this group
action and its kernel, thereby extending previous work of C.~Gunn
\cite{gunn-11a}, \cite{gunn-17a}, R.~Jurk \cite{jurk-81a}, M.~Hagemann and
D.~Klawitter \cite{klaw+h-13a}, \cite{klaw-15a}, E.~M.~Schr\"{o}der
\cite{schroe-87b} and others. Since the details are somewhat involved, an
alternative point of view is adopted in Tables
\ref{tab:theta}--\ref{tab:theta_mod}. These tables allow us to read off all
those instances, where a kind of ``kinematic mapping'' for the projective weak
orthogonal group $\POweak\VQ$ can be obtained. Next, in Section~\ref{se:comp},
we return to the problem sketched above by comparing the Clifford algebras
$\ClVQ$ and $\Cl(\vV,cQ)$. From a result by M.-A.~Knus
\cite[Ch.~IV,~(7.1.1)]{knus-91a}, we are in a position to identify the
underlying vector spaces of these algebras in such a way that, firstly, their
even subalgebras $\Cl_0\VQ$ and $\Cl_0(\vV,cQ)$ coincide (as algebras),
secondly, their odd parts $\Cl_1\VQ$ and $\Cl_1(\vV,cQ)$ are the same (as
vector subspaces), thirdly, the two multiplications are related in a manageable
way. Using this identification, it turns out that all our results from
Section~\ref{se:proj-metric} remain unaltered when going over from $Q$ to $cQ$.
This is due to the fact that the Clifford algebra $\ClVQ$ just serves as a kind
of ``container'' for its even and odd part, but we never use any element of
$\ClVQ$ from outside these two subspaces. Finally, Section~\ref{se:future}
provides a list of open questions that may lead to future research.
\par
Let us close by pointing out that our note is not intended to be a critical
survey. We therefore mainly quote such work that will clear the way to previous
contributions. Also, whenever we just refer to other sources without using
them, we usually do not emphasise diverging definitions, differing hypotheses
and other deviations from our approach.

\section{Metric vector spaces}\label{se:metric-vs}

Let $\vV$ be a vector space with finite dimension $n+1\geq 0$ over a
(commutative) field $F$. We suppose that $\vV$ is equipped with a quadratic
form $Q\colon \vV\to F$; the zero form is not excluded. There is a widespread
literature about quadratic forms; see, for example, \cite[Ch.~8]{cohn-03a},
\cite{elma+k+m-08a}, \cite{lam-05a} or \cite{tayl-92a}. We adopt the
terminology from \cite{schroe-95a} by addressing $\VQ$ as a \emph{metric vector
space}. A non-zero vector $\va \in \vV$ is called \emph{regular} if $Q(\va)\neq
0$ and \emph{singular} otherwise. Observe that none of these attributes applies
to the zero vector. A subspace of $\vV$ is said to be \emph{totally singular}
if all its non-zero vectors are singular.
\par
Let $B\colon \vV \x \vV \to F\colon (\vx ,\vy )\mapsto Q(\vx +\vy )-Q(\vx
)-Q(\vy )$ denote the \emph{polar form} of $Q$, which is a symmetric bilinear
form. Then, for all $\vx \in \vV$, $B(\vx ,\vx )= 2 Q(\vx )$.
\emph{Orthogonality} is written as $\perp$; that is, for all $\vx,\vy\in\vV$,
$\vx\perp\vy$ means $B(\vx,\vy)=0$. Each subset $\vS\subseteq \vV$ determines
the subspace $\vS^\perp :=\{\vx\in \vV\mid \vx\perp\vy \mbox{~for
all~}\vy\in\vS\}$ of $\vV$. In particular, $\vV^\perp$ is called the
\emph{radical} of $B$. The form $B$ is said to be \emph{non-degenerate}
provided that $\vV^\perp=\{0\}$.
\par
Let $\VQtilde$ also be a metric vector space over $F$. A linear bijection
$\psi\colon\vV\to\tilde\vV$ is called a \emph{similarity} if $c Q=\tilde Q\circ
\psi$ for some $c\in F^\times:=F\setminus\{0\}$. Provided that $Q(\vV)\neq\{
0\}$, the scalar $c$ is uniquely determined by $\psi$ and it will be addressed
as the \emph{ratio} of $\psi$. Whenever $Q(\vV)=\{0\}$ we adopt the convention
to consider only $1\in F^\times$ as being the \emph{ratio} of $\psi$. An
\emph{isometry} is understood to be a similarity of ratio $c=1$.
\par
We recall that the \emph{general orthogonal group} $\GO\VQ$ is that subgroup of
the general linear group $\GL(\vV)$ which comprises all similarities of $\VQ$
onto itself. All isometries of $\VQ$ onto itself constitute the
\emph{orthogonal group} $\Orth\VQ$. The \emph{weak orthogonal
group}\footnote{We follow here the terminology and notation from
\cite{elle-77a}. In German this group is known under the name ``ortho\-gonale
Gruppe im engeren Sinne''. $\Oweak\VQ$ must not be confused with the derived
group of $\Orth\VQ$; see \cite[p.~39]{cohn-03a}.} $\Oweak\VQ$ consists of all
isometries of $\VQ$ that fix the radical $\vV^\perp$ elementwise. Each regular
vector $\vr\in\vV$ determines the mapping
\begin{equation}\label{eq:reflect}
    \xi_{\vr} \colon \vV\to \vV\colon
    \vx \mapsto \vx - {B( \vr,\vx)}{Q(\vr)}^{-1} \vr .
\end{equation}
We call $\xi_\vr$ the \emph{reflection} of $\VQ$ in the direction of $\vr$ and
note that $\xi_\vr\in\Oweak\VQ$. Under $\xi_{\vr}$ all vectors in $\vr^{\perp}$
are fixed and $\vr$ goes over to $-\vr$. Also, $\xi_{\vr}$ is the identity on
$\vV$ if, and only if, $\vr$ is a regular vector in the radical $\vV^\perp$;
this can only happen in case of characteristic $2$; see
\cite[1.6.2]{schroe-95a}. We are now in a position to write up a version of the
classical Cartan-Dieudonn\'{e} Theorem as follows. Each isometry $\phi\in\Oweak\VQ$
is a product of reflections, except when $F$ and $\VQ$ satisfy one of the
subsequent conditions \eqref{eq:ex1} or \eqref{eq:ex2} for some basis
$\{\ve_0,\ve_1,\ldots,\ve_n\}$ of $\vV$ and all $\vx=\sum_{j=0}^{n}x_j\ve_j$
with $x_j\in F$:
\begin{align}
    |F|=2,\; \dim\vV > 2       &\mbox{~~and~~} Q(\vx) = x_0x_1 ;        \label{eq:ex1}\\
    |F|=2,\; \dim\vV\geq 4     &\mbox{~~and~~} Q(\vx) = x_0x_1+x_2x_3 . \label{eq:ex2}
\end{align}
We refer to \cite{goet-68b}, \cite{goet-68a}, \cite{knes-70a} for proofs and to
\cite{brady+m-15a}, \cite{elle-77a}, \cite{elle-78a}, \cite{elle+f-88a},
\cite{elle+f+n-84a}, \cite{frank+m-87a}, \cite{helm-17c}, \cite{klop-77a},
\cite[p.~18]{lam-05a}, \cite{nolte-76a}, \cite[1.6.3]{schroe-95a},
\cite[pp.~156--159]{tayl-92a} for further details, generalisations and
additional references. Let us just mention that the reflections of $\VQ$
generate a \emph{proper subgroup} of its weak orthogonal group whenever $F$ and
$\VQ$ meet the requirements of \eqref{eq:ex1} or \eqref{eq:ex2}.

\section{Clifford algebras}\label{se:clifford}

Let $\VQ$ be a metric vector space over $F$ (as in Section~\ref{se:metric-vs})
and let $\ClVQ$ denote its \emph{Clifford algebra}; see, among others,
\cite[pp.~35ff. and 101ff.]{chev-97a}, \cite[8.4]{cohn-03a},
\cite[Ch.~8,~Ch.~13]{grove-02a}, \cite[Ch.~3]{helm+m-08a}
\cite[Ch.~IV]{knus-91a} or \cite[Ch.~5]{lam-05a}. In our study we shall adopt
two widely used conventions. Firstly, we identify $1\in F$ with the unit
element of the $F$-algebra $\ClVQ$ and, secondly, we consider $\vV$ as being a
subspace of $\ClVQ$. So $\ClVQ$ is the universal associative and unital algebra
over $F$ that is generated by $\vV$ and subject to the relations $Q(\vx)=\vx^2$
for all $\vx\in\vV$. Consequently, for all $\vx,\vy\in\vV$, we have
$B(\vx,\vy)=\vx\vy+\vy\vx$. We now write up some well known properties of
$\ClVQ$ in order to fix our notation.
\par
The Clifford algebra $\ClVQ$ is $\bZ/(2\bZ)$-graded and so it is the direct sum
of the \emph{even part} $\Cl_0\VQ$, which is a subalgebra of $\ClVQ$, and the
\emph{odd part} $\Cl_1\VQ$. If $\vh\in\Cl_i\VQ$, $i\in\{0,1\}$, then we say
that $\vh$ is \emph{homogeneous} of \emph{degree} $i$ and write
$\partial\vh=i$. Given any subset $\vS\subseteq\ClVQ$ we let
$\vS_i:=\vS\cap\Cl_i\VQ$ for $i\in\{0,1\}$ and we denote by $\vS^\times$ the
set of all units (w.r.t.\ multiplication) in $\vS$.
\par
The \emph{main involution} $\sigma\colon\ClVQ\to\ClVQ$ is the only algebra
endomorphism of $\ClVQ$ such that $\vx\mapsto -\vx$ for all $\vx\in\vV$. Under
$\sigma$ all elements of $\Cl_0\VQ$ remain fixed, any $\vh\in\Cl_1\VQ$ goes
over to $-\vh$. The \emph{reversal} $\alpha\colon\ClVQ\to\ClVQ$ is the only
algebra antiendomorphism of $\ClVQ$ such that $\vx\mapsto \vx$ for all
$\vx\in\vV$. Each of the mappings $\sigma$ and $\alpha$ is a bijection leaving
invariant $\Cl_0\VQ$ and $\Cl_1\VQ$.
\par
$\ClVQ$ is endowed with an (increasing) \emph{canonical filtration} by
subspaces $\Cl\kl{k}\VQ$, $k\in\bZ$, as follows
\cite[pp.~108--109]{helm+m-08a}: if $k<0$ then $\Cl\kl{k}\VQ=\{0\}$; if $k\geq
0$ then $\Cl\kl{k}\VQ$ is that subspace of $\ClVQ$ which is generated by all
products of at most $k$ vectors from $\vV$. Thereby an empty product of vectors
is understood to be $1\in F\subseteq\ClVQ$. If $\{\ve_0,\ve_1,\ldots,\ve_n\}$
is a basis of $\vV$, then we obtain a basis of $\ClVQ$ as
\begin{equation}\label{eq:Cl-basis}
    \bigl\{\ve_{j_1}\ve_{j_2}\cdots \ve_{j_k} \mid 0\leq j_1<j_2<\cdots<j_k\leq n \bigr\} .
\end{equation}
\par
Let $\vT$ be a subspace of $\vV$. The restriction $Q|\vT$ makes $\vT$ into a
metric vector space. The unital subalgebra of $\ClVQ$ generated by $\vT$ will
be considered as the Clifford algebra of $(\vT,Q|\vT)$.
\par
Suppose that $Q(\vV)=\{0\}$. Then $\ClVQ$ coincides (upon writing ``$\wedge$''
instead of ``$\cdot$'') with the exterior algebra $\bigwedge\vV$. Let $\vm$ be
any element of $\Cl_1\VQ=\bigoplus_{j\in\{1,3,5, \ldots\}}\bigwedge^j\vV$. From
$\bigwedge\vV$ being $\bN$-graded (see \cite[p.~185]{helm+m-08a}), the product
of $\vm$ and any $\vn\in\ClVQ=\bigwedge\vV$ belongs to the subspace
$\bigoplus_{k\in\{1,2,3, \ldots\}}\bigwedge^k\vV$. Therefore $\vm$ fails to be
invertible and we note, for later use:
\begin{equation}\label{eq:Q=0}
    Q(\vV)=\{0\} \mbox{~~implies~~} \Cl_1^\times\VQ=\emptyset .
\end{equation}
\par
The following results are taken from \cite[Ch.~IV,~(7.1.1)]{knus-91a} in a form
tailored to our needs. Let $\VQ$ and $\VQtilde$ denote metric vector spaces and
suppose that $\psi\colon\vV\to\tilde\vV$ is a similarity of ratio $c\in
F^\times$. Then there is a unique homomorphism of algebras $\Cl_0(\psi)\colon
\Cl_0\VQ\to \Cl_0\VQtilde$ such that, for all $\vx,\vy\in\vV$,
\begin{equation}\label{eq:Cl0psi}
    \Cl_0(\psi)(\vx\vy) = c^{-1}\psi(\vx)\psi(\vy) .
\end{equation}
Furthermore, there is a unique linear mapping $\Cl_1(\psi)\colon \Cl_1\VQ\to
\Cl_1\VQtilde$ such that, for all $\vp\in\Cl_0\VQ$ and all $\vx\in\vV$,
\begin{equation}\label{eq:Cl1psi}
    \begin{aligned}
    \Cl_1(\psi)(\vp\vx) &= \Cl_0(\psi)(\vp)\cdot\psi(\vx), \\
    \Cl_1(\psi)(\vx\vp) &= \psi(\vx)\cdot\Cl_0(\psi)(\vp) .
    \end{aligned}
\end{equation}
Take notice that, for all $\vx\in\vV$, $\Cl_1(\psi)(\vx)=\psi(\vx)$ follows
from \eqref{eq:Cl1psi} by letting $\vp=1$. This motivates our name
\emph{Clifford extension} of $\psi$ for the mapping
\begin{equation}\label{eq:Clpsi}
    \Cl_0(\psi)\oplus\Cl_1(\psi) = : \Cl(\psi)\colon \ClVQ\to\Cl\VQtilde .
\end{equation}
Since $\psi^{-1}$ is a similarity of ratio $c^{-1}$, there exists
$\Cl(\psi^{-1})=\Cl(\psi)^{-1}$. Thus, by virtue of $\Cl_0(\psi)$, the even
Clifford algebras $\Cl_0\VQ$ and $\Cl_0\VQtilde$ are isomorphic.
\par
Even though the domain of $\Cl(\psi)$ is the entire Clifford algebra $\ClVQ$,
we shall predominantly apply this mapping to homogeneous elements of $\ClVQ$.
In particular, the following formula will turn out crucial, as it describes to
which extent $\Cl(\psi)$ ``deviates'' from an isomorphism of algebras. Given
\emph{homogeneous} elements $\vm,\vn\in\ClVQ$ we assert that
\begin{equation}\label{eq:Clpsi-zwei}
    \Cl(\psi)(\vm \vn)
    = c^{-\partial\vm\partial\vn}\Cl(\psi)(\vm)\cdot\Cl(\psi)(\vn) .
\end{equation}
By the additivity of $\Cl(\psi)$ and the law of distributivity, it suffices to
verify \eqref{eq:Clpsi-zwei} when $\vm=\va_1\va_2\cdots\va_r$ and
$\vn=\va_{r+1}\va_{r+2}\cdots\va_{r+s}$ with
$\va_1,\va_2,\ldots,\va_{r+s}\in\vV$ and $r,s\geq 0$. There are four cases:
\par
\emph{Case} 1: $r$ and $s$ are even. Here \eqref{eq:Clpsi-zwei} holds
trivially, since $\Cl_0(\psi)$ is a homomorphism of algebras.
\par
\emph{Case} 2: $r$ is even and $s$ is odd. By the first equation in
\eqref{eq:Cl1psi} and Case~1,
\begin{equation*}\label{}
    \begin{aligned}
        \Cl_1(\psi)(\vm\vn)
        &=\Cl_1(\psi)\bigl(\vm(\va_{r+1}\va_{r+2}\cdots\va_{r+s-1})\va_{r+s}\bigr) \\
        &=\Cl_0(\psi)(\vm\va_{r+1}\va_{r+2}\cdots\va_{r+s-1})\cdot\psi(\va_{r+s}) \\
        &=\Cl_0(\psi)(\vm)\cdot\Cl_0(\psi)(\va_{r+1}\va_{r+2}\cdots\va_{r+s-1})\cdot\psi(\va_{r+s}) \\
        &=\Cl_0(\psi)(\vm)\cdot\Cl_1(\psi)(\vn) .
    \end{aligned}
\end{equation*}
\par
\emph{Case} 3: $r$ is odd and $s$ is even. Writing
$\vm=\va_1(\va_2\cdots\va_r)$ allows us to proceed in analogy to the previous
case, thereby using the second equation in \eqref{eq:Cl1psi}.
\par
\emph{Case} 4: $r$ and $s$ are odd. Now $\vm=(\va_1\va_2\cdots\va_{r-1})\va_r$
and $\vn=\va_{r+1}(\va_{r+2}\cdots\va_{r+s})$ together with \eqref{eq:Cl0psi}
and \eqref{eq:Cl1psi} establishes \eqref{eq:Clpsi-zwei}.
\par
Next, let $\vm_1,\vm_2,\ldots,\vm_k$, $k\geq 0$, be homogeneous elements of
$\ClVQ$ such that precisely $p$ of them are of degree $1$. Then there is a
unique integer $q\geq 0$ with $2q\leq p\leq 2q+1$. From \eqref{eq:Clpsi-zwei},
we therefore obtain
\begin{equation}\label{eq:Clpsi-viele}
    \Cl(\psi)(\vm_1\vm_2\cdots\vm_k)
    =c^{-q}\Cl(\psi)(\vm_1)\cdot\Cl(\psi)(\vm_2)\cdots\Cl(\psi)(\vm_k) .
\end{equation}
\par
There are two immediate consequences of \eqref{eq:Clpsi-viele}. Given a
homogeneous element $\vm\in\ClVQ$ we have
\begin{equation}\label{eq:Clpsi-alpha}
    \bigl(\Cl(\psi)\circ\alpha\bigr)(\vm)
    =\bigl(\tilde\alpha\circ\Cl(\psi)\bigr)(\vm),
\end{equation}
where $\tilde\alpha$ denotes the reversal on $\Cl\VQtilde$. If, moreover, $\vm$
is invertible, then
\begin{equation}\label{eq:Clpsi-inv}
    \Cl(\psi)(\vm)\cdot\Cl(\psi)(\vm^{-1})=c^{\partial\vm} ,
\end{equation}
which in turn shows that $\Cl(\psi)(\vm)$ is invertible.

\section{Lipschitz groups}\label{se:lipschitz}

The following exposition runs along the lines of the survey \cite{helm-12a} and
the summary in \cite[5.10]{helm+m-08a}. Historical remarks and additional
results may be retrieved from \cite{helm-13a}, \cite{helm-14a},
\cite{helm-17b}, \cite{helm-18a} and \cite[pp.~220--230]{loun-01b}. According
to \cite[Def.~2.1]{helm-12a} the \emph{Lipschitz monoid} $\Lip\VQ$ is the
multiplicative monoid in $\ClVQ$ generated by the union of $F$, $\vV$ and the
set
\begin{equation}\label{eq:erzeuger}
    \bigl\{ 1+\vs\vt \mid \vs,\vt\in\vV,\; Q(\vs)=Q(\vt)=B(\vs,\vt)=0 \bigr\}.
\end{equation}
The Lipschitz monoid $\Lip\VQ$ is already generated by $\vV$ except when one of
the following applies: (i) $Q(\vV)=\{0\}$; (ii) $F$ and $\VQ$ satisfy
\eqref{eq:ex1}; (iii) $F$ and $\VQ$ satisfy \eqref{eq:ex2}; see
\cite[(7)~Thm.]{helm-05a}, \cite[Thm.~2.2]{helm-12a}.
\par
Three noteworthy properties of any $\vm\in\Lip\VQ$ are as follows
\cite[(2)~Thm.]{helm-05a}, \cite[Thm.~2.7]{helm-12a}: (i)
$\alpha(\vm)\in\Lip\VQ$; (ii) $\vm\alpha(\vm)=\alpha(\vm)\vm\in F$; (iii)
$\vm\alpha(\vm)\neq 0$ characterises $\vm$ as being invertible. Therefore all
invertible elements of $\Lip\VQ$ constitute a group, the so-called
\emph{Lipschitz group} $\Lip^\times\VQ$. Furthermore, for all $k\geq 0$ and all
$\vz\in\Cl\kl{k}\VQ$, we have $\vm\vz\alpha(\vm)\in\Cl\kl{k}\VQ$; see
\cite[(23)~Cor.]{helm-05a}, \cite[Thm.~2.8]{helm-12a}. This implies, for any
$\vm\in\Lip^\times\VQ$ and all $\vx\in\vV$, that $\vm\vx\sigma(\vm)^{-1}\in\vV$
and that $Q\bigl(\vm\vx\sigma(\vm)^{-1}\bigr)=Q(\vx)$. The mapping
\begin{equation}\label{eq:twist}
    \xi\colon \Lip^\times\VQ\to\Oweak\VQ\colon
    \vp\mapsto \bigl(\xi_\vp\colon \vx\mapsto \vp\vx\sigma(\vp)^{-1}\bigr)
\end{equation}
is a surjective homomorphism of groups \cite[(35)~Thm.]{helm-05a},
\cite[Thm.~3.2]{helm-12a}; we follow \cite{atiy+b+s-64a} by addressing $\xi$ as
the \emph{twisted adjoint representation} of $\Lip^\times\VQ$. For any regular
vector $\vr\in\vV$ we clearly have $\vr\in\Lip^\times\VQ$ and the above
definition reproduces the reflection $\xi_\vr$ as in \eqref{eq:reflect}.

\begin{rem}
Any element $1+\vs\vt$ appearing in \eqref{eq:erzeuger} is in the Lipschitz
group $\Lip^\times\VQ$, since $(1+\vs\vt)\alpha(1+\vs\vt)=1$. An easy
calculation gives, for all $\vx\in\vV$,
\begin{equation*}
    \xi_{1+\vs\vt}(\vx)= \vx + B(\vt,\vx)\vs - B(\vs,\vx)\vt .
\end{equation*}
If $\vs,\vt$ are linearly dependent, then $\vs\vt=0$ and so
$\xi_{1+\vs\vt}=\id_\vV$. Otherwise, $\xi_{1+\vs\vt}$ fixes precisely the
vectors of the subspace $\{\vs,\vt\}^\perp$, which has codimension $\leq 2$ in
$\vV$.
\end{rem}

In order to describe the kernel of the twisted adjoint representation
\eqref{eq:twist}, we recall the definition of the \emph{graded centre} of
$\ClVQ$. It is defined as
\begin{equation*}
    Z^g\bigl(\ClVQ\bigr):=Z^g_0\bigl(\ClVQ\bigr)\oplus Z^g_1\bigl(\ClVQ\bigr) ,
\end{equation*}
where $Z^g_i\bigl(\ClVQ\bigr)$, $i\in\{0,1\}$, comprises precisely those
$\vp\in\Cl_i\VQ$ which satisfy $\vp\vq=(-1)^{\partial\vp\partial\vq}\vq\vp$ for
all homogeneous $\vq\in\ClVQ$; see \cite[(3.5.2)]{helm+m-08a} or
\cite[p.~152]{knus-91a}. By \eqref{eq:twist}, for all $\vp\in\Lip^\times\VQ$
and all vectors $\vx\in\vV$, we have
\begin{equation*}
     \xi_\vp(\vx)
    =\vp\vx\sigma(\vp)^{-1}
    =(-1)^{\partial\vp}\vp\vx\vp^{-1}
    =(-1)^{\partial\vp\partial\vx}\vp\vx\vp^{-1} .
\end{equation*}
Therefore, using that $\vV$ generates $\ClVQ$ as an algebra, we readily arrive
at the intermediate result
\begin{equation*}
    F^\times\subseteq\ker\xi = \Lip^\times\VQ \cap Z^g\bigl(\ClVQ\bigr) .
\end{equation*}
From \cite[(5.8.7)~Lemma]{helm+m-08a}, the graded centre of $\ClVQ$ equals the
subalgebra generated by $\vV^\perp$, which in turn may be viewed as
$\Cl(\vV^\perp,Q|\vV^\perp)$. We therefore have
\begin{equation}\label{eq:ker-xi}
    F^\times\subseteq\ker\xi = \Lip^\times\VQ \cap \Cl(\vV^\perp,Q|\vV^\perp).
\end{equation}
\par
The above description of the graded centre $Z^g\bigl(\ClVQ\bigr)$ as the
subalgebra of $\ClVQ$ generated by $\vV^\perp$ can also be read off from
\cite[(1.8)~a),~(1.9)~a)]{jurk-81a}. Likewise, the result about $\ker\xi$ may
easily be derived from \cite[(22)~Cor.]{helm-05a} or
\cite[(2.2)~Satz]{jurk-81a}. However, the author of \cite{jurk-81a} states some
essential results without proof (just quoting his thesis). The corresponding
proofs, despite their announcement in \cite{jurk-81a}, never got published.
\par
Below we collect a few more results, which are to be used later on.
\begin{lem}\label{lem:ker-xi}
Let $\VQ$ be a metric vector space. Then the kernel of the twisted adjoint
representation $\xi$ of the Lipschitz group $\Lip^\times\VQ$ satisfies the
following properties.
\begin{enumerate}
\item\label{lem:ker-xi.a} If $Q(\vV^\perp)=\{0\}$, then\/ $\ker
    \xi=\ker_0\xi$ and so\/ $\ker\xi$ is a subgroup of\/
    $\Lip_0^\times\VQ$.

\item\label{lem:ker-xi.b} Given any regular vector $\vr\in\vV^\perp$ we
    have $\vr\cdot(\ker_0\xi) = \ker_1\xi$. Therefore, whenever
    $Q(\vV^\perp)\neq\{0\}$, $\ker\xi$ is not a subgroup of\/
    $\Lip_0^\times\VQ$.

\item\label{lem:ker-xi.c} If $\dim\vV^\perp\leq 1$, then\/
    $\ker_0\xi=F^\times$.

\item\label{lem:ker-xi.d} If $\dim\vV^\perp\geq 2$, then for any
    two-dimensional subspace $\vL\subseteq\vV^\perp$ there are linearly
    independent vectors $\va,\vb\in\vL$ such that
    \begin{equation}\label{eq:ker-xi-T}
        \bigl\{x+y\va\vb \mid x\in F^\times,\;y\in F \bigr\}
        \subseteq \ker_0\xi  .
    \end{equation}
\end{enumerate}
\end{lem}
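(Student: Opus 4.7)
The common tool for every part is the identification \eqref{eq:ker-xi}, namely $\ker\xi=\Lip^\times\VQ\cap\Cl(\vV^\perp,Q|\vV^\perp)$. Thus every candidate for membership in the kernel can be analysed inside the smaller Clifford algebra built on the radical.

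Parts (a)--(c) are short. For (a), the hypothesis $Q(\vV^\perp)=\{0\}$ turns $\Cl(\vV^\perp,Q|\vV^\perp)$ into an exterior algebra; applying \eqref{eq:Q=0} to this algebra says its odd part contains no invertible elements, so $\ker_1\xi=\emptyset$ and $\ker\xi=\ker_0\xi$. For (b), a regular $\vr\in\vV^\perp$ is a unit in $\ClVQ$ with $\vr^{-1}=\vr/Q(\vr)$, lies in $\Lip^\times\VQ$, and the reflection formula \eqref{eq:reflect} collapses to $\xi_\vr=\id_\vV$ since $B(\vr,\,\cdot\,)=0$; hence $\vr\in\ker_1\xi$. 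Left multiplication by $\vr$ then defines a bijection $\ker_0\xi\to\ker_1\xi$ with inverse $\vm\mapsto\vr^{-1}\vm$ (both maps land in $\ker\xi$ because the kernel is a group). For (c), the even part of $\Cl(\vV^\perp,Q|\vV^\perp)$ reduces to $F$ when $\dim\vV^\perp\leq 1$; together with the already observed inclusion $F^\times\subseteq\ker\xi$ this forces $\ker_0\xi=F^\times$.

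Part (d) is where the actual work lies. The plan is to produce linearly independent $\va,\vb\in\vL$ for which every element $x+y\va\vb$ (with $x\in F^\times$, $y\in F$) sits inside $\Lip^\times\VQ$. Since such an element automatically belongs to $\Cl_0(\vV^\perp,Q|\vV^\perp)\subseteq Z^g(\ClVQ)$, the argument preceding \eqref{eq:ker-xi} will then place it in $\ker_0\xi$. I split on $Q|\vL$, using $B|\vL=0$ from $\vL\subseteq\vV^\perp$. If $Q|\vL=0$ (which covers every characteristic different from $2$), any basis $\{\va,\vb\}$ of $\vL$ consists of mutually orthogonal totally singular vectors, and for $y\neq 0$ the identity $x+y\va\vb=x\bigl(1+\va\cdot(y/x)\vb\bigr)$ exhibits the element as a scalar multiple of a generator from \eqref{eq:erzeuger}, whose invertibility is built in via $(1+\vs\vt)\alpha(1+\vs\vt)=1$. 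If $Q|\vL\neq 0$, which forces characteristic $2$, I pick $\va\in\vL$ with $Q(\va)\neq 0$ and take $\vb$ to be either a non-zero singular vector of $\vL$ independent from $\va$, if any, or else any vector completing $\va$ to a basis of $\vL$; in both sub-cases the factorisation $x+y\va\vb=\va\cdot\bigl((x/Q(\va))\va+y\vb\bigr)$ writes the element as a product of two vectors of $\vV$, hence places it in $\Lip\VQ$. Invertibility then reduces to non-vanishing of $(x+y\va\vb)\alpha(x+y\va\vb)=x^{2}+y^{2}Q(\va)Q(\vb)$, an identity that follows from $\vb\va=-\va\vb$.

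The main obstacle is the anisotropic sub-case of (d), where $Q(\vb)\neq 0$. Here I must show that $Q(\va)Q(\vb)$ is a non-square in $F$. With a basis $\{\ve,\vf\}$ of $\vL$ and $\alpha:=Q(\ve)$, $\beta:=Q(\vf)$, anisotropy of the diagonal form $\alpha x^{2}+\beta y^{2}$ in characteristic $2$ amounts to $\alpha\beta$ being a non-square. A direct expansion produces $Q(\va)Q(\vb)=u^{2}+\alpha\beta v^{2}$, where $v$ is (up to sign) the determinant of the coordinate matrix of $(\va,\vb)$ in the basis $\{\ve,\vf\}$; linear independence gives $v\neq 0$, and in characteristic $2$ any relation $u^{2}+\alpha\beta v^{2}=w^{2}$ rearranges to $(w+u)^{2}=\alpha\beta v^{2}$, making $\alpha\beta$ a square and contradicting anisotropy. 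This closes the argument.
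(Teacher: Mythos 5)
Your proposal is correct and follows essentially the same route as the paper: everything is funnelled through \eqref{eq:ker-xi}, parts (a)--(c) are handled in the same way, and in (d) you use the same two factorisations $x+y\va\vb = x\bigl(1+(x^{-1}y\va)\vb\bigr)$ resp.\ $x+y\va\vb=\va\bigl(xQ(\va)^{-1}\va+y\vb\bigr)$ after an equivalent case split on $Q|\vL$. The only divergence is local: in the anisotropic characteristic-two sub-case the paper simply observes that $xQ(\va)^{-1}\va+y\vb$ is a non-zero vector of a plane without singular vectors, hence regular, so the product of two regular vectors is invertible, whereas you verify $(x+y\va\vb)\alpha(x+y\va\vb)=x^{2}+y^{2}Q(\va)Q(\vb)\neq 0$ by proving that $Q(\va)Q(\vb)$ is a non-square --- a correct but longer detour to the same conclusion.
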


\begin{proof}
\eqref{lem:ker-xi.a} This is an immediate consequence of \eqref{eq:Q=0},
applied to $\Cl(\vV^\perp,Q|\vV^\perp)$, and \eqref{eq:ker-xi}.
\par
\eqref{lem:ker-xi.b} By \eqref{eq:ker-xi}, $\vr\in\vV^\perp$ implies
$\vr\in\ker\xi$, which proves the assertions.
\par
\eqref{lem:ker-xi.c} We infer from $\dim\vV^\perp\leq 1$ that
$\Cl_0(\vV^\perp,Q|\vV^\perp)=F$. Hence \eqref{eq:ker-xi} gives
$F^\times\subseteq \ker_0\xi\subseteq \Cl_0^\times(\vV^\perp,Q|\vV^\perp) =
F^\times$.
\par
\eqref{lem:ker-xi.d} We distinguish three cases: (i) $\vL$ is totally singular;
(ii) $\vL$ contains no singular vectors; (iii) $\vL$ contains a regular and a
singular vector. In the first two cases we choose any linearly independent
vectors $\va,\vb\in \vL$. If (iii) applies, we choose $\va\in\vL$ regular and
$\vb\in\vL$ singular. Now pick any element $x+y\va\vb$ as appearing in
\eqref{eq:ker-xi-T}. In view of \eqref{eq:ker-xi}, it suffices to verify that
$x+y\va\vb\in\Lip_0^\times\VQ$. In case (i), this follows from
\begin{equation*}
    x+y\va\vb=x\bigl(1+(x^{-1}y\va)\vb\bigr),
\end{equation*}
$x\in F^\times$ and $Q(x^{-1}y\va)=Q(\vb)=B(x^{-1}y\va,\vb)=0$. Otherwise,
$\va$ is regular and so $\Char F=2$. By writing
\begin{equation*}
    x+y\va\vb=\va(x\va^{-1}+y\vb) ,
\end{equation*}
it remains to verify that $x\va^{-1}+y\vb$ is regular. In case (ii), this turns
out trivial. In case (iii), we have $Q(x\va^{-1}+y\vb)=x^2Q(\va^{-1})+0\neq 0$.
\end{proof}

\section{Projective metric geometry}\label{se:proj-metric}

Let $\vV$ be a vector space over $F$ as described at the beginning of
Section~\ref{se:metric-vs}. By the \emph{projective space} $\bPV$ we mean here
the set of all subspaces of $\vV$ with \emph{incidence} being symmetrised
inclusion \cite[p.~30]{buek+c-95a}. The \emph{dimension}\footnote{In order to
avoid ambiguity, we shall frequently add the attribute ``projective'' when
speaking about the dimension of a projective space.} of $\bP(\vV)$ is one less
than the dimension of $\vV$; that is, $\bP(\vV)$ has projective dimension $n$.
We adopt the usual geometric terms: \emph{points}, \emph{lines} and
\emph{planes} are the subspaces of $\vV$ with (vector) dimension one, two, and
three, respectively. Likewise, any subspace $\vT$ of $\vV$ gives rise to a
projective space $\bP(\vT)$, which is a substructure of $\bPV$. The general
linear group $\GL(\vV)$ acts in a canonical way on $\bP(\vV)$: any
$\kappa\in\GL(\vV)$ determines a \emph{projective collineation} on $\bP(\vV)$,
which is given by $\vX\mapsto\kappa(\vX)$ for all $\vX\in\bP(\vV)$. The kernel
of this action of $\GL(\vV)$ equals $F^\times \id_\vV$.
\par
Next, assume $\VQ$ to be a metric vector space. Then $Q$ can been used to
furnish the projective space with ``additional structure'', thus making it into
a \emph{projective metric space} $\bPVQ$. Thereby, for all $c\in F^\times$, the
spaces $\bPVQ$ and $\bP(\vV,cQ)$ are considered as being \emph{equal}. We refer
to \cite{schroe-95a} for a detailed description under the assumption
$Q(\vV)\neq\{0\}$; otherwise any ``additional structure'' arising from $Q$ is
trivial. Let us recall a few notions derived from $\VQ$ that remain unchanged
if $Q$ is replaced by $cQ$. The orthogonality relations of $\VQ$ and of
$(\vV,cQ)$ coincide. All points $F\vs$ with $\vs\in\vV$ being singular
constitute the \emph{absolute quadric} $\cF\VQ$ of $\bPVQ$.\footnote{Since we
allow $Q$ to be the zero form, $\cF\VQ$ may coincide with the point set of
$\bPV$.} This quadric does not alter when going over to $cQ$. Also, we have
$\GO\VQ=\GO(\vV,cQ)$, $\Orth\VQ=\Orth(\vV,cQ)$ and $\Oweak\VQ=\Oweak(\vV,cQ)$.
\par
\par
In contrast, the Clifford algebras $\ClVQ$ and $\Cl(\vV,cQ)$, $c\in F^\times$,
\emph{need not be isomorphic}; see Example~\ref{exa:nichtiso}, where it is also
shown that an analogous statement holds for the associated Lipschitz groups.
Nevertheless, for the remaining part of this section, we shall make extensive
use of the Clifford algebra $\ClVQ$. The problem of how things change when
going over to $\Cl(\vV,cQ)$ will be addressed in Section~\ref{se:comp}.
\par
By the above, any isometry $\phi\in\Oweak\VQ$ determines a projective
collineation of $\bP\VQ$. This action of $\Oweak\VQ$ on $\bP\VQ$ has the kernel
\begin{equation}\label{eq:kern}
    \Iweak\VQ := \Oweak\VQ\cap\{\id_\vV,-\id_\vV\}.
\end{equation}
The quotient of $\Oweak\VQ$ by $\Iweak\VQ$ is the \emph{projective weak
orthogonal group}, in symbols $\POweak\VQ$. Then
\begin{equation}\label{eq:kern=1}
    |{\Iweak\VQ}| = 1 \;\Leftrightarrow\; \Bigl(
     \vV=\{0\}          \mbox{~or~}
     \vV^\perp\neq\{0\} \mbox{~or~}
     \Char F = 2 \Bigr) .
\end{equation}
On the other hand, $|{\Iweak\VQ}|\neq 1$ implies $|{\Iweak\VQ}|=2$.
\par
Now we change over to the projective space on the Clifford algebra $\ClVQ$,
where we introduce several point sets. Given such a set, say $\cS$, we define
$\cS_i$, $i\in\{0,1\}$, to be the subset of $\cS$ comprising all points that
are contained in $\Cl_i\VQ$. We start by defining
\begin{equation}\label{eq:H}
    \cH\VQ := \bigl\{F\vp\mid \vp\in\Cl_0^\times\VQ \cup \Cl_1^\times\VQ \bigr\}
\end{equation}
and proceed by making $\cH\VQ$ into a (multiplicative) group as follows:
$(F\vp)(F\vq):=F(\vp\vq)$ for all $F\vp,F\vq\in\cH\VQ$. Clearly, there is a
canonical isomorphism of groups
\begin{equation}\label{eq:iso-H}
    \bigl(\Cl_0^\times\VQ \cup \Cl_1^\times\VQ\bigr)/F^\times \to \cH\VQ
    \colon F^\times\vp \mapsto F\vp .
\end{equation}
So, essentially, the two groups from above are the same. The Lipschitz monoid
$\Lip\VQ$ gives rise to the point set
\begin{equation}\label{eq:M}
    \cM\VQ:=\bigl\{F\vp\mid 0\neq\vp\in\Lip\VQ\bigr\}  .
\end{equation}
By the definition of $\Lip\VQ$, the point set $\cM\VQ$ is the disjoint union of
$\cM_0\VQ$ and $\cM_1\VQ$.
\begin{rem}
The sets $\cM_0\VQ$ and $\cM_1\VQ$ are algebraic varieties of the projective
spaces on $\Cl_0\VQ$ and $\Cl_1\VQ$, respectively. See \cite[p.~673]{helm-12a}
and \cite{helm-11a}, where a wealth of further properties of these varieties
can be found. In particular, all subspaces of $\bP\bigl(\ClVQ\bigr)$ whose
point set is contained in $\cM\VQ$ have been determined there. Let us just
mention the following particular case. If $\dim\vV\leq 3$, then $\cM_0\VQ$
resp.\ $\cM_1\VQ$ comprises \emph{all points} of $\bP\bigl(\Cl_0\VQ\bigr)$
resp.\ $\bP\bigl(\Cl_1\VQ\bigr)$ \cite[(31)~Lemma]{helm-05a}.
\end{rem}

\begin{rem}
In general, the point set $\cM\VQ$ cannot be made into a monoid by following
the path taken above. This is because the product of two non-zero elements of
$\Lip\VQ$ may be the zero vector, which fails to span a point.
\end{rem}

Our third point set is
\begin{equation}\label{eq:G}
    \cG\VQ:=\cH\VQ\cap\cM\VQ =\bigl\{F\vp\mid \vp\in\Lip^\times\VQ\bigr\} ,
\end{equation}
which is a subgroup of $\cH\VQ$. The canonical isomorphism from
\eqref{eq:iso-H} determines (by restriction) the isomorphism of groups
\begin{equation}\label{eq:iso-G}
    \Lip^\times\VQ / F^\times \to \cG\VQ .
\end{equation}
\par
The Lipschitz group $\Lip^\times\VQ$ contains $F^\times$ as a normal subgroup.
The representation \eqref{eq:twist} factors through the canonical homomorphism
$\Lip^\times\VQ \to \Lip^\times\VQ / F^\times$. We therefore have a surjective
homomorphism of groups
\begin{equation}\label{eq:twist-F}
    \Lip^\times\VQ / F^\times \to\Oweak\VQ \colon F^\times\vp \mapsto \xi_\vp ,
\end{equation}
the \emph{twisted adjoint representation} of the quotient group $\Lip^\times\VQ
/ F^\times$. By virtue of the inverse of \eqref{eq:iso-G}, the twisted adjoint
representation \eqref{eq:twist-F} and the canonical action of $\Oweak\VQ$ on
$\bP\VQ$, the group $\cG\VQ$ as in \eqref{eq:G} acts on the projective space
$\bP\VQ$. Explicitly, for all $F\vp\in\cG\VQ$ and all $\vX\in\bP\VQ$, this
action of $\cG\VQ$ takes the form
\begin{equation}\label{eq:G-action}
    F\vp \mapsto \bigl(\vX\mapsto\xi_\vp(\vX)=\vp\vX\sigma(\vp)^{-1}\bigr) .
\end{equation}
Furthermore, the action of $\cG\VQ$ on $\bP\VQ$ yields a surjective
homomorphism of groups
\begin{equation}\label{eq:theta}
    \theta\colon\cG\VQ\to\POweak\VQ=\Oweak\VQ / \Iweak\VQ
    \colon F\vp \mapsto \Iweak\VQ\circ \xi_\vp ;
\end{equation}
see \eqref{eq:kern}. By our construction, $\ker\theta$ is just the kernel of
the group action described in \eqref{eq:G-action}. This means
\begin{equation}\label{eq:kern_theta}
    \ker\theta =\bigl\{F\vp\in\cG\VQ \mid \xi_\vp\in \Iweak\VQ \bigr\} .
\end{equation}

\begin{rem}\label{rem:compat}
Let any $F\vm\in\cH\VQ$ be given. The (group theoretic) \emph{left translation}
by $F\vm$, that is the mapping $F\vq\mapsto (F\vm)(F\vq)$ for all
$F\vq\in\cH\VQ$, extends to a projective collineation of the ambient projective
space. Obviously, the \emph{left translation}
$\lambda_\vm\in\GL\bigl(\ClVQ\bigr)$, which is given by $\vz\mapsto\vm\vz$ for
all $\vz\in\ClVQ$, provides a solution. The same properties hold, \emph{mutatis
mutandis}, for the \emph{right translation} by $F\vm$ and its counterpart
$\rho_\vm\colon \vz\mapsto\vz\vm$ on the Clifford algebra.
\par
Given any $F\vm\in\cG\VQ$ the above observations clearly remain true when
replacing $\cH\VQ$ with $\cG\VQ$. However, $\cG\VQ$ satisfies yet another
property, which appears to be more substantial. The mapping that sends any
$F\vq\in\cG\VQ$ to its inverse $(F\vq)^{-1}$ also extends to a projective
collineation of the ambient projective space. Such a collineation is determined
by the reversal $\alpha$, since $\vq\alpha(\vq)\in F^\times$ for all
$\vq\in\Lip^\times\VQ$; see the noteworthy properties (i)--(iii) of the
Lipschitz monoid mentioned at the beginning of Section~\ref{se:lipschitz}.
\end{rem}

We proceed by examining in detail the kernel of the surjective homomorphism
$\theta$ appearing in \eqref{eq:theta}. We also investigate whether or not the
subgroup $\theta\bigl(\cG_0\VQ\bigr)$ coincides with its ambient group
$\POweak\VQ$. Clearly, whenever $\cG_0\VQ=\cG\VQ$, the answer to the latter
question is affirmative. The large number of cases makes us split our findings
into three theorems, according to the dimension of the radical $\vV^\perp$.

\begin{thm}\label{thm:iso.0}
Let $\VQ$ be a metric vector space with $\dim\vV^\perp=0$. Then the surjective
homomorphism $\theta\colon\cG\VQ\to\POweak\VQ$ has the following properties.
\begin{enumerate}
\item\label{thm:iso.0.a} If\/ $\dim\vV=0$, then $\ker\theta=\{F1\}$ and\/
    $\cG_0\VQ=\cG\VQ$.

\item\label{thm:iso.0.b} If\/ $\dim\vV>0$ and\/ $\Char F=2$, then
    $\ker\theta=\{F1\}$ and $\theta\bigl(\cG_0\VQ\bigr)\neq\POweak\VQ$.

\item\label{thm:iso.0.c} If\/ $\dim\vV$ is odd and $\Char F\neq 2$, then\/
    $\ker\theta$ comprises precisely two points, namely $F1\in\cG_0\VQ$ and
    one more point in $\cG_1\VQ$. Furthermore,
    $\theta\bigl(\cG_0\VQ\bigr)=\POweak\VQ$.

\item\label{thm:iso.0.d} If\/ $\dim\vV>0$ is even and\/ $\Char F\neq 2$,
    then\/ $\ker\theta$ comprises precisely two points, both of which in
    $\cG_0\VQ$. Furthermore, $\theta\bigl(\cG_0\VQ\bigr)\neq\POweak\VQ$.
\end{enumerate}
\end{thm}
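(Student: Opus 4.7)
The overall plan is to exploit the fact that in the non-degenerate case the kernel of the twisted adjoint collapses to scalars, so the preimages of $\id_\vV$ and $-\id_\vV$ form at most two projective points; a pseudoscalar element built from an orthogonal basis of regular vectors supplies the preimage of $-\id_\vV$ whenever it is available. I would first invoke Lemma~\ref{lem:ker-xi}\,(c) to get $\ker\xi=F^\times$ from $\dim\vV^\perp=0$, which via \eqref{eq:kern_theta} means that $F\vp\in\ker\theta$ if and only if $\xi_\vp\in\{\id_\vV,-\id_\vV\}\cap\Oweak\VQ$, and that within a fixed value of $\xi_\vp$ there is exactly one projective point $F\vp$. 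The trivial case~(a) is then immediate since $\ClVQ=F$. Otherwise $\dim\vV>0$ together with $\vV^\perp=\{0\}$ forces $Q\not\equiv 0$ (else $B\equiv 0$ and $\vV^\perp=\vV$), so regular vectors and the associated reflections exist. In case~(b) we have $\Char F=2$, so $\Iweak\VQ=\{\id_\vV\}$ by \eqref{eq:kern=1} and hence $\ker\theta=\{F1\}$; for any regular $\vr$ the unique $\vp$ with $\xi_\vp=\xi_\vr$ is a scalar multiple of $\vr$, so $F\vr\in\cG_1\VQ$ is the only preimage, showing $\xi_\vr\notin\theta(\cG_0\VQ)$.

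For parts~(c) and~(d), now $\Char F\neq 2$, and I would fix an orthogonal basis $\{\ve_0,\ldots,\ve_n\}$ of $\vV$ consisting of regular vectors (available by diagonalisation since $B$ is non-degenerate and $\Char F\neq 2$) and set $\vz:=\ve_0\ve_1\cdots\ve_n$. Because $\ve_i\ve_j=-\ve_j\ve_i$ for $i\neq j$, a short sign count shows $\vz\ve_k=(-1)^n\ve_k\vz$ for each $k$, and together with $\sigma(\vz)=(-1)^{n+1}\vz$ this yields $\xi_\vz(\ve_k)=(-1)^{2n+1}\ve_k=-\ve_k$; hence $\xi_\vz=-\id_\vV$. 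Since $\vz$ is a product of regular vectors, $\vz\alpha(\vz)\in F^\times$ and so $\vz\in\Lip^\times\VQ$. By the uniqueness noted above, every other representative of $-\id_\vV$ is of the form $c\vz$ with $c\in F^\times$, so $\ker\theta=\{F1,F\vz\}$ has exactly two points.

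The split between (c) and (d) is then governed by $\partial\vz=(n+1)\bmod 2$. In~(c), $\dim\vV$ is odd, $\vz\in\Cl_1\VQ$, so the two kernel points split as $F1\in\cG_0\VQ$ and $F\vz\in\cG_1\VQ$; moreover every $\vp\in\Lip^\times\VQ$ lies in $\Lip_0^\times\VQ\cup\Lip_1^\times\VQ$ (the Lipschitz monoid is $\bZ/(2\bZ)$-graded), and if $\vp\in\Lip_1^\times\VQ$ then $\vz\vp\in\Lip_0^\times\VQ$ with $\xi_{\vz\vp}=-\xi_\vp$, which represents the same class of $\POweak\VQ$; consequently $\theta(\cG_0\VQ)=\POweak\VQ$. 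In~(d), $\dim\vV$ is even and positive, $\vz\in\Cl_0\VQ$, so both kernel points lie in $\cG_0\VQ$; for any regular $\vr$ the only $\vp$ with $\xi_\vp\in\{\xi_\vr,-\xi_\vr\}$ belong to $F^\times\vr$ or $F^\times\vz\vr$, both of which are odd, so the class of $\xi_\vr$ is missed by $\theta(\cG_0\VQ)$. The one technical step that requires care is the parity bookkeeping producing $\xi_\vz=-\id_\vV$ from the interplay of $\sigma(\vz)$ and the commutation of $\vz$ past a single basis vector; once this is in place the four assertions are an organised combination of the kernel description, the parity grading, and \eqref{eq:kern=1}.
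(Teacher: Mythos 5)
Your proposal is correct and follows essentially the same route as the paper: reduce via Lemma~\ref{lem:ker-xi} to $\ker\xi=F^\times$, use \eqref{eq:kern=1} and \eqref{eq:kern_theta} to describe $\ker\theta$, exhibit the preimage of $-\id_\vV$ as the pseudoscalar $\ve_0\ve_1\cdots\ve_n$ built from an orthogonal basis, and settle the question about $\theta(\cG_0\VQ)$ by parity together with a regular vector $\vr$ in the cases (b), (d) and multiplication by the pseudoscalar in case (c). The only differences are cosmetic: you spell out the sign computation giving $\xi_{\ve}=-\id_\vV$, which the paper merely asserts.
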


\begin{proof}
To begin with, we note that
\begin{equation}\label{eq:iso.0.lokal}
    \dim\vV^\perp=0
    \mbox{~~implies~~}
    \ker\xi=\ker_0\xi=F^\times,
\end{equation}
as follows readily from Lemma~\ref{lem:ker-xi}~\eqref{lem:ker-xi.a} and
\eqref{lem:ker-xi.c}.
\par
\eqref{thm:iso.0.a} The assertions hold, since $\cG\VQ=\cG_0\VQ=\{F1\}$.
\par
\eqref{thm:iso.0.b} Due to $\Char F=2$ and \eqref{eq:kern=1}, we have
$\Iweak\VQ=\{\id_\vV\}$. Therefore \eqref{eq:kern_theta} and
\eqref{eq:iso.0.lokal} give $\ker\theta=\{F1\}$. As $\vV^\perp=\{0\}$ and
$\dim\vV>0$, there exists a regular vector $\vr\in\vV$ and so
$F\vr\in\cG\VQ\setminus\cG_0\VQ$. Since $\ker\theta=\{F1\}$ means that $\theta$
is injective, we obtain $\theta(F\vr)\in \POweak\VQ \setminus
\theta\bigl(\cG_0\VQ\bigr)$.
\par
\eqref{thm:iso.0.c} Now \eqref{eq:kern=1} implies $|{\Iweak\VQ}|=2$. There
exists an \emph{orthogonal} basis $\{\ve_0,\ve_1,\ldots,\ve_n\}$ of $\VQ$ and
we put
\begin{equation}\label{eq:e}
    \ve:=\ve_0\ve_1\cdots\ve_n .
\end{equation}
From $\vV^\perp=\{0\}$ we obtain $\ve\in\Lip^\times\VQ$. Together with
$\dim\vV\geq 1$ and $\Char F\neq 2$ this shows $\xi_\ve=-\id_\vV\neq\id_\vV$.
So, from \eqref{eq:kern_theta} and \eqref{eq:iso.0.lokal},
$\ker\theta=\{F1,F\ve\}$ is a group of order two. Clearly, $F1\in\ker_0\theta$.
\par
As $\dim\vV$ is odd, $F\ve\in\ker_1\theta$. We have
\begin{equation}\label{eq:theta-G0=G1}
      \theta\bigl(\cG_0\VQ\bigr)
    = \theta\bigl(F\ve\cdot\cG_0\VQ\bigr)
    = \theta\bigl(\cG_1\VQ\bigr),
\end{equation}
whence $\theta\bigl(\cG_0\VQ\bigr) = \theta\bigl(\cG\VQ\bigr)=\POweak\VQ$.
\par
\eqref{thm:iso.0.d} We may repeat the reasoning from \eqref{thm:iso.0.c} up to
the end of the first paragraph. By contrast, now $\dim\vV>0$ is even, whence
$F\ve\in\ker_0\theta$. In analogy with \eqref{thm:iso.0.b}, there is a regular
vector $\vr\in\vV$ and so $F\vr\in\cG\VQ \setminus\cG_0\VQ$. Taking into
account that $\ker\theta$ is contained in $\cG_0\VQ$, we obtain
$\theta(F\vr)\in \POweak\VQ \setminus \theta\bigl(\cG_0\VQ\bigr)$.
\end{proof}
Note that under the hypotheses of Theorem~\ref{thm:iso.0}~\eqref{thm:iso.0.b}
the bilinear form $B$ is non-degenerate and alternating. Therefore, $\dim\vV$
is necessarily even.


\begin{thm}\label{thm:iso.1}
Let $\VQ$ be a metric vector space with $\dim\vV^\perp=1$. Then the surjective
homomorphism $\theta\colon\cG\VQ\to\POweak\VQ$ has the following properties.
\begin{enumerate}
\item\label{thm:iso.1.a} If $Q(\vV^\perp)=\{0\}$, then $\ker\theta=\{F1\}$.

\item\label{thm:iso.1.b} If $Q(\vV^\perp)=\{0\}$ and $\dim\vV=1$, then
    $\cG_0\VQ=\cG\VQ$.

\item\label{thm:iso.1.c} If $Q(\vV^\perp)=\{0\}$ and $\dim\vV> 1$, then
    $\theta\bigl(\cG_0\VQ\bigr) \neq \POweak\VQ$.

\item\label{thm:iso.1.d} If $Q(\vV^\perp)\neq\{0\}$, then $\ker\theta$
    comprises precisely two points, namely $F1\in\cG_0\VQ$ and one more
    point in $\cG_1\VQ$. Furthermore, $\theta\bigl(\cG_0\VQ\bigr) =
    \POweak\VQ$.
\end{enumerate}
\end{thm}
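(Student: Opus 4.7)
The plan is to exploit the hypothesis $\dim\vV^\perp=1$, which together with \eqref{eq:kern=1} forces $\Iweak\VQ=\{\id_\vV\}$. Consequently \eqref{eq:kern_theta} reduces $\ker\theta$ to the image of $\ker\xi$ in $\cG\VQ$ under the isomorphism \eqref{eq:iso-G}, so the whole analysis is driven by Lemma~\ref{lem:ker-xi} together with a check of the degrees of the relevant elements.

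For \eqref{thm:iso.1.a}, I would combine Lemma~\ref{lem:ker-xi}~\eqref{lem:ker-xi.a} (valid because $Q(\vV^\perp)=\{0\}$) with Lemma~\ref{lem:ker-xi}~\eqref{lem:ker-xi.c} (valid because $\dim\vV^\perp\leq 1$) to obtain $\ker\xi=\ker_0\xi=F^\times$, whence $\ker\theta=\{F1\}$. Part \eqref{thm:iso.1.b} is then immediate: $\dim\vV=\dim\vV^\perp=1$ forces $\vV=\vV^\perp$ and so $Q(\vV)=Q(\vV^\perp)=\{0\}$, hence \eqref{eq:Q=0} yields $\Cl_1^\times\VQ=\emptyset$ and $\cG\VQ=\cG_0\VQ$.

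For \eqref{thm:iso.1.c} the decisive step is to produce a regular vector. If $Q(\vV)=\{0\}$ were to hold, then the polar form $B$ would vanish as well, forcing $\vV^\perp=\vV$ and contradicting $\dim\vV>1=\dim\vV^\perp$. Hence some $\vr\in\vV$ is regular, and the hypothesis $Q(\vV^\perp)=\{0\}$ then forces $\vr\notin\vV^\perp$; in particular $F\vr\in\cG_1\VQ$. Combined with the injectivity of $\theta$ supplied by \eqref{thm:iso.1.a}, this yields $\theta(\cG_0\VQ)\subsetneq\theta(\cG\VQ)=\POweak\VQ$.

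For \eqref{thm:iso.1.d} I would start by picking a generator $\vr$ of $\vV^\perp$; since $Q(\vV^\perp)\neq\{0\}$ the vector $\vr$ must be regular, and then $0=B(\vr,\vr)=2Q(\vr)$ forces $\Char F=2$. Lemma~\ref{lem:ker-xi}~\eqref{lem:ker-xi.c} gives $\ker_0\xi=F^\times$, and Lemma~\ref{lem:ker-xi}~\eqref{lem:ker-xi.b} gives $\ker_1\xi=\vr F^\times$; passing to $\cG\VQ$ yields $\ker\theta=\{F1,F\vr\}$ with $F1\in\cG_0\VQ$ and $F\vr\in\cG_1\VQ$. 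The surjectivity $\theta(\cG_0\VQ)=\POweak\VQ$ then follows because $F\vr\in\ker\theta$ and left multiplication by $F\vr$ swaps $\cG_0\VQ$ and $\cG_1\VQ$ (since $\partial\vr=1$): every $F\vp\in\cG_1\VQ$ satisfies $\theta(F\vp)=\theta\bigl(F(\vr\vp)\bigr)$ with $F(\vr\vp)\in\cG_0\VQ$. The main obstacle is to correctly locate the extra kernel point in case \eqref{thm:iso.1.d} and verify that it precisely compensates for the failure of surjectivity exhibited in case \eqref{thm:iso.1.c}; both phenomena are two sides of the same dichotomy controlled by $Q(\vV^\perp)$, and Lemma~\ref{lem:ker-xi}~\eqref{lem:ker-xi.b} is exactly what bridges them.
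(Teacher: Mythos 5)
Your proposal is correct and follows essentially the same route as the paper: $\Iweak\VQ=\{\id_\vV\}$ from \eqref{eq:kern=1}, reduction of $\ker\theta$ to $\ker\xi/F^\times$ via \eqref{eq:kern_theta}, Lemma~\ref{lem:ker-xi}~\eqref{lem:ker-xi.a}--\eqref{lem:ker-xi.c} for the kernel computations, \eqref{eq:Q=0} for part~\eqref{thm:iso.1.b}, a regular vector outside $\cG_0\VQ$ plus injectivity for part~\eqref{thm:iso.1.c}, and multiplication by the kernel point $F\vr\in\cG_1\VQ$ to get $\theta\bigl(\cG_0\VQ\bigr)=\POweak\VQ$ in part~\eqref{thm:iso.1.d}, exactly as in the paper's analogue of \eqref{eq:theta-G0=G1}. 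The extra remarks (e.g.\ $\Char F=2$ in case~\eqref{thm:iso.1.d}) are harmless and consistent with the paper's comments.
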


\begin{proof}
First of all, from \eqref{eq:kern=1}, we have that
\begin{equation}\label{eq:iso.1.lokal}
    \dim\vV^\perp=1
    \mbox{~~implies~~}
    \Iweak\VQ=\{\id_\vV\} .
\end{equation}
\par
\eqref{thm:iso.1.a} From Lemma~\ref{lem:ker-xi}~\eqref{lem:ker-xi.a} and
\eqref{lem:ker-xi.c}, we have $\ker\xi=\ker_0\xi=F^\times$. Thus, using
\eqref{eq:kern_theta} and \eqref{eq:iso.1.lokal}, we arrive at
$\ker\theta=\{F1\}$.
\par
\eqref{thm:iso.1.b} By our hypotheses, we have $Q(\vV)=Q(\vV^\perp)=\{0\}$. So
\eqref{eq:Q=0} yields that $\Cl_1\VQ$ contains no regular vector. Therefore
$\cG_0\VQ=\cG\VQ$, as required.
\par
\eqref{thm:iso.1.c} Due to $\vV^\perp\neq\vV$, there exists a regular vector in
$\vr\in\vV$ and so $F\vr\in\cG\VQ \setminus\cG_0\VQ$. From \eqref{thm:iso.1.a},
$\theta$ is injective, whence $\theta(F\vr)\in \POweak\VQ \setminus
\theta\bigl(\cG_0\VQ\bigr)$.
\par
\eqref{thm:iso.1.d} There is a regular vector $\vr\in\vV^\perp$. Using
Lemma~\ref{lem:ker-xi}~\eqref{lem:ker-xi.b} and \eqref{lem:ker-xi.c}, we
obtain\footnote{We note that $F1\oplus F\vr$ is a subalgebra of
$\Cl(\vV^\perp,Q|\vV^\perp)$ and as such an inseparable quadratic extension
field of $F$.} $\ker\xi = F^\times \cup F^\times\vr$. Together with
\eqref{eq:kern_theta} and \eqref{eq:iso.1.lokal} this implies
$\ker\theta=\{F1,F\vr\}$ and $F\vr\cdot\cG_0\VQ=\cG_1\VQ$. Thus
$\theta\bigl(\cG_0\VQ\bigr)=\theta\bigl(\cG\VQ\bigr)=\POweak\VQ$ follows (by
replacing $\ve$ with $\vr$) in analogy to \eqref{eq:theta-G0=G1} .
\end{proof}

Regarding Theorem~\ref{thm:iso.1}~\eqref{thm:iso.1.d}, it seems worth pointing
out that $Q(\vV)\neq\{0\}$ implies $\Char F=2$. Together with $\dim\vV^\perp=1$
this forces $\dim\vV$ to be odd, since $B$ induces a non-degenerate alternating
bilinear form on the quotient space $\vV/\vV^\perp$.

\begin{thm}\label{thm:iso.2}
Let $\VQ$ be a metric vector space with $\dim\vV^\perp\geq 2$. Then the
surjective homomorphism $\theta\colon\cG\VQ\to\POweak\VQ$ has the following
properties.
\begin{enumerate}
\item\label{thm:iso.2.a} If $Q(\vV^\perp)=\{0\}$, then $\ker\theta$
    contains at least\/ $|F|$ points from\/ $\cG_0\VQ$ but no points from\/
    $\cG_1\VQ$.
\item\label{thm:iso.2.b} If $Q(\vV^\perp)=\{0\}$ and\/
    $\dim\vV=\dim\vV^\perp$, then\/ $\cG_0\VQ=\cG\VQ$.
\item\label{thm:iso.2.c} If $Q(\vV^\perp)=\{0\}$ and\/
    $\dim\vV>\dim\vV^\perp$, then
    $\theta\bigl(\cG_0\VQ\bigr)\neq\POweak\VQ$.
\item\label{thm:iso.2.d} If $Q(\vV^\perp)\neq\{0\}$, then\/ $\ker\theta$
    contains at least\/ $|F|$ points from\/ $\cG_0\VQ$ and at least\/ $|F|$
    points from\/ $\cG_1\VQ$. Furthermore, $\theta\bigl(\cG_0\VQ\bigr) =
    \POweak\VQ$.
\end{enumerate}
\end{thm}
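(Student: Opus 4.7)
My plan is to reduce everything to the description of $\ker\xi$ provided by Lemma~\ref{lem:ker-xi}. Since $\dim\vV^\perp \geq 2 \geq 1$, \eqref{eq:kern=1} forces $\Iweak\VQ = \{\id_\vV\}$, so \eqref{eq:kern_theta} identifies $\ker\theta$ with $\{F\vp \in \cG\VQ : \vp \in \ker\xi\}$. The central technical ingredient throughout will be a projective count of the parametric family displayed in \eqref{eq:ker-xi-T}.

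For \eqref{thm:iso.2.a}, I would pick any two-dimensional subspace $\vL \subseteq \vV^\perp$ and invoke Lemma~\ref{lem:ker-xi}\eqref{lem:ker-xi.d} to obtain linearly independent $\va,\vb\in\vL$ with $\{x + y\va\vb : x\in F^\times,\,y\in F\} \subseteq \ker_0\xi$. Extending $\{\va,\vb\}$ to a basis of $\vV$ and reading off \eqref{eq:Cl-basis}, the elements $1$ and $\va\vb$ are linearly independent in $\Cl_0\VQ$; hence the family above descends to $|F|$ distinct points in $\bP\bigl(\ClVQ\bigr)$, parametrised by $[x:y]\in\bP^1(F)$ with $x\neq 0$. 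All of them lie in $\ker_0\theta\subseteq\cG_0\VQ$, while the absence of $\ker\theta$-points in $\cG_1\VQ$ is immediate from Lemma~\ref{lem:ker-xi}\eqref{lem:ker-xi.a}. Part \eqref{thm:iso.2.b} requires only the observation that $\vV=\vV^\perp$ combined with $Q(\vV^\perp)=\{0\}$ forces $Q(\vV)=\{0\}$, so \eqref{eq:Q=0} collapses $\cG\VQ$ onto $\cG_0\VQ$.

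For \eqref{thm:iso.2.c}, I would first exhibit a regular vector $\vr\in\vV$ by a short case distinction: pick $\vx\in\vV\setminus\vV^\perp$ and $\vy$ with $B(\vx,\vy)\neq 0$; if both $\vx$ and $\vy$ are singular, then $\vx+\vy$ is regular. Then $F\vr\in\cG_1\VQ\setminus\cG_0\VQ$, and if one had $\theta(F\vr)=\theta(F\vq)$ for some $F\vq\in\cG_0\VQ$, the product $F\vr\vq^{-1}$ would lie in $\ker_1\theta$, contradicting \eqref{thm:iso.2.a}.

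For \eqref{thm:iso.2.d}, I would fix a regular $\vr\in\vV^\perp$ (provided by $Q(\vV^\perp)\neq\{0\}$) and reuse the family from \eqref{thm:iso.2.a} to secure $|F|$ points of $\ker_0\theta$; Lemma~\ref{lem:ker-xi}\eqref{lem:ker-xi.b} then transports them, via left multiplication by $F\vr$ in the group $\cH\VQ$, to $|F|$ further, distinct points of $\ker_1\theta\subseteq\cG_1\VQ$. The equality $\theta\bigl(\cG_0\VQ\bigr)=\POweak\VQ$ will follow just as in the final step of the proof of Theorem~\ref{thm:iso.1}\eqref{thm:iso.1.d}: any $F\vp\in\cG_1\VQ$ is $\theta$-equivalent to $F(\vr\vp)\in\cG_0\VQ$ since $F\vr\in\ker\theta$. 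The main obstacle I foresee is precisely the projective counting of \eqref{eq:ker-xi-T}: one must be sure that the affine-looking set really descends to exactly $|F|$ distinct points in $\bP\bigl(\ClVQ\bigr)$, which hinges on the linear independence of $1$ and $\va\vb$ coming from \eqref{eq:Cl-basis}.
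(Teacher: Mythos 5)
Your argument is correct and follows essentially the same route as the paper: triviality of $\Iweak\VQ$ via \eqref{eq:kern=1} together with \eqref{eq:kern_theta}, the family $F(1+y\va\vb)$ coming from Lemma~\ref{lem:ker-xi}~\eqref{lem:ker-xi.d} for the even kernel points, Lemma~\ref{lem:ker-xi}~\eqref{lem:ker-xi.a} for the absence of odd kernel points, \eqref{eq:Q=0} for \eqref{thm:iso.2.b}, and a regular vector outside the radical for \eqref{thm:iso.2.c}. The only (harmless) deviation is in \eqref{thm:iso.2.d}, where the paper chooses $\vL$ through a regular vector $\va\in\vV^\perp$ and reads the odd kernel points $F\bigl(\va+yQ(\va)\vb\bigr)$ directly off the factorisation in the proof of Lemma~\ref{lem:ker-xi}~\eqref{lem:ker-xi.d}, while you obtain them by translating the even family with $F\vr$ via Lemma~\ref{lem:ker-xi}~\eqref{lem:ker-xi.b}; both yield the same conclusion, and your explicit closing argument for $\theta\bigl(\cG_0\VQ\bigr)=\POweak\VQ$ makes precise a step the paper leaves implicit.
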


\begin{proof}
\eqref{thm:iso.2.a} Lemma~\ref{lem:ker-xi}~\eqref{lem:ker-xi.a} gives
$\ker\xi=\ker_0\xi$ and, from \eqref{eq:kern=1}, we have
$\Iweak\VQ=\{\id_\vV\}$. Thus \eqref{eq:kern_theta} shows
$\ker\theta=\ker_0\theta\subseteq\cG_0\VQ$. Let $\vL$ be any two-dimensional
subspace of $\vV^\perp$. By adopting the terminology from
Lemma~\ref{lem:ker-xi}~\eqref{lem:ker-xi.d} and by substituting $x:=1$ in
\eqref{eq:ker-xi-T}, we arrive at
\begin{equation}\label{eq:punkte}
    \{F(1+y\va\vb)\mid y\in F\}\subseteq\ker_0\theta
    \mbox{~~and~~}
    \bigl|\{F(1+y\va\vb)\mid y\in F\}\bigr| = |F| .
\end{equation}
\par
\eqref{thm:iso.2.b} The assertion follows from \eqref{eq:Q=0}.
\par
\eqref{thm:iso.2.c} From $\dim\vV>\dim\vV^\perp$, there exists a regular vector
$\vr\in\vV$ and so $F\vr\in\cG\VQ \setminus \cG_0\VQ$. In view of
\eqref{thm:iso.2.a}, $\ker\theta$ is contained in $\cG_0\VQ$. This in turn
establishes $\theta(F\vr)\in\POweak\VQ\setminus\theta\bigl(\cG_0\VQ\bigr)$.
\par
\eqref{thm:iso.2.d} There exists a two-dimensional subspace $\vL$ of
$\vV^\perp$ that contains a regular vector $\va$, say. We pick any vector
$\vb\in\vL$ such that $\va,\vb$ are linearly independent. According to the
proof of Lemma~\ref{lem:ker-xi}~\eqref{lem:ker-xi.d} we now use these vectors
to obtain \eqref{eq:ker-xi-T} and, as in \eqref{thm:iso.2.a}, we substitute
there $x:=1$. In this way we get a point set as in \eqref{eq:punkte}. This
implies that $\ker_1\theta$ contains at least $|F|$ points, namely all points
of the form $F\bigl(\va+yQ(\va)\vb\bigr)$ with $y$ varying in $F$.
\end{proof}

Our description of $\ker\theta$ in Theorems~\ref{thm:iso.0}, \ref{thm:iso.1}
and \ref{thm:iso.2} improves \cite[(2.3)~Satz]{jurk-81a} in two ways: The
result b) from there describes an analogue of our surjective homomorphism
$\theta$ onto the group $\POweak\VQ$; however, it is based upon a subgroup of
$\cH\VQ$ that in general is \emph{larger} than our $\cG\VQ$. The result c) from
there coincides with our findings whenever $\Lip^\times\VQ$ is generated by all
regular vectors of $\VQ$, but provides no information about the exceptional
cases \eqref{eq:ex1} and \eqref{eq:ex2}.
\par
Clearly, the surjective homomorphism $\theta$ as in \eqref{eq:theta} turns out
to be an \emph{isomorphism of $\cG\VQ$ onto $\POweak\VQ$} if, and only if,
$\ker\theta$ contains no point other than $F1$. There are few possibilities for
this to happen. All of them can be read off from Table~\ref{tab:theta}. The
first entry in each row (other than the header) provides a reference to the
corresponding theorem, the remaining entries summarise the conditions that have
to be met. Entries in braces are redundant and could be omitted.
\par
Likewise, there is a rather small number of instances such that
$\theta|\cG_0\VQ$ establishes an \emph{isomorphism of $\cG_0\VQ$ onto
$\POweak\VQ$}. An exhaustive summary is given in Table~\ref{tab:theta_0}. Note
that there is a single overlap between Table~\ref{tab:theta} and
Table~\ref{tab:theta_0}. It pertains the trivial case $\dim\vV=0$, where
$\cG\VQ=\cG_0\VQ$.
\par
There is one more noteworthy situation, where $\theta$ fails to be injective,
but $\ker\theta$ is a \emph{group of order two}; the details are displayed in
Table~\ref{tab:theta_mod}. Here the group $\cG_0\VQ$ is equipped with the
distinguished point $F\ve$, which does not depend on the choice of the
orthogonal basis $\{\ve_0,\ve_1,\ldots,\ve_n\}$ of $\vV$ that has been used in
\eqref{eq:e} when defining $\ve$. The left translation $\lambda_\ve$ (right
translation $\rho_\ve$) acts on $\bP\bigl(\ClVQ\bigr)$ as a projective
collineation; see Remark~\ref{rem:compat}. It is easy to verify that, for all
$j\in\{0,1,\ldots,n\}$, we have $\ve\ve_j=-\ve_j\ve$. Using the basis
\eqref{eq:Cl-basis} of $\ClVQ$, we therefore obtain
\begin{equation*}
    \lambda_\ve|\Cl_0\VQ =  \rho_\ve|\Cl_0\VQ
    \mbox{~~and~~}
    \lambda_\ve|\Cl_1\VQ =  -\rho_\ve|\Cl_1\VQ .
\end{equation*}
Thus, even though $\lambda_\ve$ and $\rho_\ve$ act differently on
$\bP\bigl(\ClVQ\bigr)$, their actions on $\bP\bigl(\Cl_0\VQ\bigr)$ and
$\bP\bigl(\Cl_1\VQ\bigr)$ coincide.
\begin{table}[ht!]
\centering
\begin{tabular}{c|c|c|c|c}
    \hline
    Theorem                            &$\dim\vV^\perp$&$Q(\vV^\perp)$&$\dim\vV$      &$\Char F$\\ \hline
    \ref{thm:iso.0}~\eqref{thm:iso.0.a}&($=0$)         &($=\{0\}$)    &$=0$           &         \\
    \ref{thm:iso.0}~\eqref{thm:iso.0.b}&$=0$           &($=\{0\}$)    &$>0$ (and even)&$=2$     \\
    \ref{thm:iso.1}~\eqref{thm:iso.1.a}&$=1$           &$=\{0\}$      &               &         \\

\end{tabular}
\caption{$\cG\VQ \cong \POweak\VQ$~~(via $\theta$)}\label{tab:theta}
\end{table}
\begin{table}[ht!]
\centering
\begin{tabular}{c|c|c|c|c}
    \hline
    Theorem                            &$\dim\vV^\perp$&$Q(\vV^\perp)$&$\dim\vV$&$\Char F$\\ \hline
    \ref{thm:iso.0}~\eqref{thm:iso.0.a}&($=0$)         &($=\{0\}$)    &$=0$     &         \\
    \ref{thm:iso.0}~\eqref{thm:iso.0.c}&$=0$           &($=\{0\}$)    &odd      &$\neq2$  \\
    \ref{thm:iso.1}~\eqref{thm:iso.1.b}&$=1$           &$=\{0\}$      &$=1$     &         \\
    \ref{thm:iso.1}~\eqref{thm:iso.1.d}&$=1$           &$\neq\{0\}$   &(odd)    &($=2$)
\end{tabular}
\caption{$\cG_0\VQ\cong\POweak\VQ$~~(via $\theta|\cG_0\VQ$)}\label{tab:theta_0}
\end{table}
\begin{table}[ht!]
\centering
\begin{tabular}{c|c|c|c|c}
    \hline
    Theorem                            &$\dim\vV^\perp$&$Q(\vV^\perp)$&$\dim\vV$    &$\Char F$\\ \hline
    \ref{thm:iso.0}~\eqref{thm:iso.0.d}&$=0$           &($=\{0\}$)    &$>0$ and even&$\neq2$
\end{tabular}
\caption{$\cG\VQ/\{F1,F\ve\}\cong\POweak\VQ$}\label{tab:theta_mod}
\end{table}
\par
To conclude this section, let us point out the following. If one of the
situations from Table~\ref{tab:theta} occurs, then we may consider
$\theta^{-1}$ as being a bijective \emph{``kinematic mapping''} for the group
$\POweak\VQ$. Note that this just a name for a series of examples rather than a
general definition of such a mapping. Also, if one of the situations from
Table~\ref{tab:theta_0} occurs, we have a bijective \emph{``kinematic
mapping''} for the group $\POweak\VQ$ given by
$\bigl(\theta|\cG_0\VQ\bigr)^{-1}$. Under the restrictions of
Table~\ref{tab:theta_mod} we still have a kind of \emph{``kinematic mapping''},
but here one element of $\POweak\VQ$ is represented by an \emph{unordered pair
of points} from $\cG\VQ$. Some of the examples in \cite[3.4]{klaw-15a} and
\cite[Sect.~6]{klaw+h-13a} fit into the above concepts. However, the quoted
works should be read with caution due to several misprints.

\section{A comparison of Clifford algebras}\label{se:comp}

We now switch back to a problem that we encountered in
Section~\ref{se:proj-metric}. Given a metric vector space $\VQ$ and a constant
$c\in F^\times$ what is the relationship between the Clifford algebras $\ClVQ$
and $\Cl(\vV,cQ)$? For a closer look, we take into account that the identity
$\id_\vV$ is a similarity of ratio $c$ from $\VQ$ onto $(\vV,cQ)$. (Recall our
convention that $c=1$ whenever $Q(\vV)=\{0\}$.) Therefore, according to
\eqref{eq:Clpsi}, we obtain a linear bijection
\begin{equation}\label{eq:Clid}
    \Cl(\id_\vV)\colon\ClVQ\to\Cl(\vV,cQ) .
\end{equation}
This linear bijection allows us pulling back the algebra structure from
$\Cl(\vV,cQ)$ to $\ClVQ$, which amounts to introducing a ``new'' multiplication
$\odot_c$ on the vector space $\ClVQ$. The algebra obtained in this way is
isomorphic to $\Cl(\vV,cQ)$ and will be abbreviated as $\Cl(\vV,Q,\odot_c)$. A
bridge between the initial and the new multiplication is provided by
\eqref{eq:Cl0psi} and \eqref{eq:Cl1psi}. We read off from there, for all
$\vx,\vy\in\vV$ and all $\vp\in\Cl_0\VQ$:
\begin{equation}\label{eq:c.mult-zwei}
    \vx\vy=c^{-1}\vx\odot_c\vy,\quad \vp\vx=\vp\odot_c\vx,\quad \vx\vp=\vx\odot_c\vp .
\end{equation}
Similarly, one may write up analogues of \eqref{eq:Clpsi-zwei},
\eqref{eq:Clpsi-viele}, \eqref{eq:Clpsi-alpha} and \eqref{eq:Clpsi-inv}. In
what follows right now, we shall adopt a slightly different point of view. We
investigate the Clifford algebras of metric vector spaces $\VQ$ and $\VQtilde$
admitting a similarity $\psi$ of ratio $c\in F^\times$ with $\Cl(\psi)$ playing
the role of the linear bijection \eqref{eq:Clid}. We shall return to
$\Cl(\vV,Q,\odot_c)$ only at the end of this section.

\begin{exa}\label{exa:nichtiso}
Let $\vV$ be a one-dimensional vector space over the field $\bR$ of real
numbers and let $\vi\in\vV$ be non-zero. We define a quadratic form
$Q\colon\vV\to\bR$ by $Q(\vi)=-1$. Then $\ClVQ$ and the field $\bC$ of complex
numbers are isomorphic as $\bR$-algebras, as follows from $\vi^2=-1$.
Furthermore, let $\VQtilde$ be isometric to $(\vV,-Q)$, whence there is a
similarity $\psi\colon\vV\to\tilde\vV$ of ratio $-1$. From $\tilde
Q\bigl(\psi(\vi)\bigr)=\psi(\vi)^2=1$, the Clifford algebra $\Cl\VQtilde$
contains zero divisors\footnote{For example,
$\bigl(1+\psi(\vi)\bigr)\bigl(1-\psi(\vi)\bigr)=0$.}, whence the algebras
$\Cl\VQtilde$ and $\ClVQ\cong\bC$ cannot be isomorphic; see also \cite[Ex.~1.5,
pp.~104--105]{lam-05a} or the table of real Clifford algebras
\cite[p.~123]{lam-05a}.
\par
The Lipschitz group $\Lip^\times\VQ$ reads $\bR^\times\cup \bR^\times\vi$. In
$\Lip^\times\VQ$ we have $\vi^2=-1$, whereas no element of
$\Lip^\times\VQtilde=\bR^\times\cup \bR^\times\psi(\vi)$ squares to $-1$. So
the Lipschitz groups of $\VQ$ and $\VQtilde$ cannot be isomorphic either. In
contrast, the quotient groups $\Lip^\times\VQ/\bR^\times$ and
$\Lip^\times\VQtilde/\bR^\times$ both have order two and so they are
isomorphic; see Theorem~\ref{thm:psi-faktor}.
\end{exa}

\begin{thm}\label{thm:psi}
Let $\psi\colon\vV\to\tilde\vV$ be a similarity of ratio $c\in F^\times$ of
metric vector spaces $\VQ$ and $\VQtilde$. Then the Clifford extension
$\Cl(\psi)$ has the following properties.
\begin{enumerate}
\item\label{thm:psi.a} $\Cl(\psi)$ maps the canonical filtration of\/
    $\ClVQ$ onto the canonical filtration of\/ $\Cl\VQtilde$.
\item\label{thm:psi.b} $\Cl(\psi)$ maps the group\/ $\Cl_0^\times\VQ \cup
    \Cl_1^\times\VQ$ onto\/ $\Cl_0^\times\VQtilde \cup
    \Cl_1^\times\VQtilde$, the Lipschitz monoid\/ $\Lip\VQ$ onto\/
    $\Lip\VQtilde$ and, consequently, the Lipschitz group\/
    $\Lip^\times\VQ$ onto\/ $\Lip^\times\VQtilde$.
\item\label{thm:psi.c} For any $\vm\in\Lip^\times\VQ$, the isometries
    $\xi_\vm\in\Oweak\VQ$ and $\tilde\xi_{\Cl(\psi)(\vm)}\in\Oweak\VQtilde$
    satisfy $ \psi\circ\xi_{\vm} = \tilde\xi_{\Cl(\psi)(\vm)}\circ\psi$.
\end{enumerate}
\end{thm}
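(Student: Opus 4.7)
The plan is to derive all three parts from the master formula \eqref{eq:Clpsi-viele} combined with the invertibility identity \eqref{eq:Clpsi-inv}, exploiting throughout the fact that $\psi^{-1}$ is a similarity of ratio $c^{-1}$ and hence $\Cl(\psi^{-1}) = \Cl(\psi)^{-1}$. Part \eqref{thm:psi.a} follows almost at once: $\Cl\kl{k}\VQ$ is spanned by products $\va_1\va_2\cdots\va_j$ with $j\leq k$, and \eqref{eq:Clpsi-viele} sends such a product to a scalar multiple of $\psi(\va_1)\cdots\psi(\va_j)\in\Cl\kl{k}\VQtilde$; linearity yields $\Cl(\psi)(\Cl\kl{k}\VQ)\subseteq\Cl\kl{k}\VQtilde$, and the reverse inclusion comes from applying the same argument to $\psi^{-1}$.

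For \eqref{thm:psi.b}, I would split the assertion into three pieces. First, $\Cl_0(\psi)$ is an algebra isomorphism of the even parts, hence matches up the units there, while \eqref{eq:Clpsi-inv} does the analogous job on the odd parts (surjectivity again supplied by $\psi^{-1}$). Second, to show $\Cl(\psi)(\Lip\VQ)\subseteq\Lip\VQtilde$ I would verify the claim on each generator listed before \eqref{eq:erzeuger}: scalars go to scalars and vectors go to vectors via $\psi$, while a generator $1+\vs\vt$ is sent by \eqref{eq:Cl0psi} to $1+\tilde\vs\tilde\vt$ with $\tilde\vs:=c^{-1}\psi(\vs)$ and $\tilde\vt:=\psi(\vt)$, and a one-line computation using $\tilde Q\circ\psi=cQ$ and its polarisation confirms that $\tilde\vs,\tilde\vt$ satisfy the isotropy conditions in $\VQtilde$, so this too is a generator of $\Lip\VQtilde$. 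A general element of $\Lip\VQ$ is a product of generators, which \eqref{eq:Clpsi-viele} transforms into an element of $F^\times\subseteq\Lip\VQtilde$ times the product of the images, hence still in $\Lip\VQtilde$. The Lipschitz-group statement then follows by intersecting with the units via \eqref{eq:Clpsi-inv}, and surjectivity throughout is supplied by $\psi^{-1}$.

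For \eqref{thm:psi.c}, the key point is that $\vm\in\Lip^\times\VQ$ is homogeneous and $\xi_\vm(\vx)=\vm\vx\sigma(\vm)^{-1}\in\vV$, so $\psi(\xi_\vm(\vx))=\Cl(\psi)(\vm\vx\sigma(\vm)^{-1})$. Expanding the right-hand side via \eqref{eq:Clpsi-viele} applied to the three homogeneous factors and inserting $\sigma(\vm)=(-1)^{\partial\vm}\vm$ together with the inverse relation $\Cl(\psi)(\vm^{-1})=c^{\partial\vm}\Cl(\psi)(\vm)^{-1}$ from \eqref{eq:Clpsi-inv}, a brief case distinction on $\partial\vm\in\{0,1\}$ shows that the prefactor $c^{-q}$ (with $q=\partial\vm$ in either case) cancels the $c^{\partial\vm}$ coming from \eqref{eq:Clpsi-inv}, while the remaining sign $(-1)^{\partial\vm}$ is exactly the one contributed by $\tilde\sigma$ on $\Cl(\psi)(\vm)$; one lands on $\tilde\xi_{\Cl(\psi)(\vm)}(\psi(\vx))$. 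The one aspect demanding care throughout is the bookkeeping of these scalar corrections $c^{\pm q}$, which encode the failure of $\Cl(\psi)$ to be an algebra homomorphism; since $F^\times$ lies inside both $\Lip\VQtilde$ and $\ker\tilde\xi$, these prefactors stay harmless and ultimately cancel.
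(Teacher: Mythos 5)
Your proposal is correct and follows essentially the same route as the paper: part (a) via \eqref{eq:Clpsi-viele} together with $\Cl(\psi^{-1})=\Cl(\psi)^{-1}$, part (b) by checking the generators of $\Lip\VQ$ (with the isotropy conditions for $c^{-1}\psi(\vs),\psi(\vt)$) and absorbing the scalar $c^{-q}$, units via \eqref{eq:Clpsi-inv}, and part (c) by the same bookkeeping $q=\partial\vm$ so that the factors $c^{-q}$ and $c^{\partial\vm}$ cancel and the sign is accounted for by $\tilde\sigma$. The only cosmetic difference is that you insert $\sigma(\vm)=(-1)^{\partial\vm}\vm$ directly where the paper invokes $\Cl(\psi)\circ\sigma=\tilde\sigma\circ\Cl(\psi)$, which amounts to the same computation.
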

\begin{proof}
\eqref{thm:psi.a} Pick any integer $k\geq 0$. According to
\eqref{eq:Clpsi-viele}, $\Cl(\psi)$ takes any product of $k$ vectors from $\vV$
to a product of $k$ vectors from $\tilde\vV$, and an analogous statement holds
for $\Cl(\psi^{-1})=\Cl(\psi)^{-1}$. Thus the image of $\Cl\kl{k}\VQ$ under
$\Cl(\psi)$ equals $\Cl\kl{k}\VQtilde$, as required.
\par
\eqref{thm:psi.b} To begin with, choose any $\vm\in\Cl_0^\times\VQ \cup
\Cl_1^\times\VQ$. From \eqref{eq:Clpsi-inv}, the element $\Cl(\psi)(\vm)$ is in
$\Cl_0^\times\VQtilde \cup \Cl_1^\times\VQtilde$.
\par
We now show that $\Cl(\psi)$ sends any generator of $\Lip\VQ$, that is to mean
any element $\vg$ from $F$, $\vV$ or the set \eqref{eq:erzeuger}, to a
generator of $\Lip\VQtilde$ of the same kind. If $\vg$ is in $F\cup\vV$, then
this is obvious. If $\vg$ belongs to the set \eqref{eq:erzeuger} or, more
explicitly, if $\vg = 1+\vs\vt$ with $\vs,\vt\in\vV$ subject to
$Q(\vs)=Q(\vt)=B(\vs,\vt)=0$, then \eqref{eq:Clpsi-zwei} implies
$\Cl(\psi)(1+\vs\vt) = 1+c^{-1}\psi(\vs)\psi(\vt)$. As $\psi$ is a similarity,
we obtain $\tilde Q\bigl(c^{-1}\psi(\vs)\bigr)=\tilde
Q\bigl(\psi(\vt)\bigr)=\tilde B \bigl(c^{-1}\psi(\vs),\psi(\vt)\bigr)=0$,
whence $\Cl(\psi)(\vg)$ has the required property.
\par
Next, let any $\vm\in\Lip\VQ$ be given. By definition, $\vm$ is a product of
$k\geq 0$ generators $\vg_1,\vg_2,\ldots,\vg_k$ that come from $F$, $\vV$ or
the set \eqref{eq:erzeuger}. From \eqref{eq:Clpsi-viele}, there is an integer
$q\geq 0$ such that
\begin{equation*}
    \Cl(\psi)(\vm)
    =c^{-q}\Cl(\psi)(\vg_1)\cdot\Cl(\psi)(\vg_2)\cdots\Cl(\psi)(\vg_k) .
\end{equation*}
Thus, by the above, $\Cl(\psi)(\vm)\in\Lip\VQtilde$.
\par
Finally, \eqref{thm:psi.b} follows by repeating the above considerations with
the similarity $\psi^{-1}$ instead of $\psi$.
\par
\eqref{thm:psi.c} Choose any $\vx\in\vV$. Let $p$ be the number of factors with
degree $1$ in the product $\vm\vx\sigma(\vm)^{-1}$ and denote by $q$ the
integer satisfying $2q\leq p \leq 2q+1$. As
$\partial\bigl(\sigma(\vm)^{-1}\bigr)=\partial\vm$, we may argue as follows. If
$\partial\vm=0$, then $p=1$ and $q=0$. If $\partial\vm=1$, then $p=3$ and
$q=1$. Therefore, we always have $q=\partial\vm$. Now, from
\eqref{eq:Clpsi-viele}, \eqref{eq:Clpsi-inv} and
$\Cl(\psi)\circ\sigma=\tilde\sigma\circ\Cl(\psi)$, we get
\begin{equation*}
\begin{aligned}
    \psi\big(\xi_\vm(\vx)\big) &= \Cl(\psi)\bigl(\vm\vx\sigma(\vm)^{-1}\bigr) \\
    &= c^{-q}\Cl(\psi)(\vm) \cdot \Cl(\psi)(\vx) \cdot \Cl(\psi)\bigl(\sigma(\vm)^{-1}\bigr)\\
    &= c^{-q+\partial\vm}
       \Cl(\psi)(\vm) \cdot \psi(\vx) \cdot \tilde\sigma\bigl(\Cl(\psi)(\vm)\bigr)^{-1}\\
    &= \tilde\xi_{\Cl(\psi)(\vm)}\bigl(\psi(\vx)\bigr) ,
\end{aligned}
\end{equation*}
which completes the proof.
\end{proof}

\begin{thm}\label{thm:psi-faktor}
Under the hypotheses of Theorem\/ \emph{\ref{thm:psi}} the following hold.
\begin{enumerate}
\item\label{thm:psi-faktor.a} For any homogeneous element $\vm\in\ClVQ$ and
    all subspaces $\vH\subseteq\Cl_0\VQ\cup\Cl_1\VQ$, we have
    \begin{equation}\label{eq:mH}
    \begin{aligned}
        \Cl(\psi)(\vm\vH)    &=\Cl(\psi)(\vm)  \cdot\Cl(\psi)(\vH) ,\\
        \Cl(\psi)(\vH\vm)    &=\Cl(\psi)(\vH)  \cdot\Cl(\psi)(\vm) .\\
    \end{aligned}
    \end{equation}
\item\label{thm:psi-faktor.b} The assignment $ F^\times\vp\mapsto
    F^\times\bigl(\Cl(\psi)(\vp)\bigr) = \Cl(\psi)(F^\times\vp)$ defines an
    isomorphism of groups
    \begin{equation*}
        \bigl(\Cl_0^\times\VQ \cup \Cl_1^\times\VQ\bigr)/F^\times \to
        \bigl(\Cl_0^\times\VQtilde \cup \Cl_1^\times\VQtilde\bigr)/F^\times ,
    \end{equation*}
    an isomorphism of monoids\/ $\Lip\VQ / F^\times \to \Lip\VQtilde /
        F^\times$ and, consequently, also an isomorphism of groups\/
    \begin{equation}\label{eq:psi-faktor-lip}
        \Lip^\times\VQ / F^\times \to \Lip^\times\VQtilde / F^\times .
    \end{equation}

\item\label{thm:psi-faktor.c} The twisted adjoint representations of the
    quotient groups\/ $\Lip^\times\VQ / F^\times$ and\/
    $\Lip^\times\VQtilde /F^\times$ are equivalent by virtue of the
    isomorphism \eqref{eq:psi-faktor-lip} and the given similarity
    $\psi\colon\vV\to\tilde\vV$.
\end{enumerate}
\end{thm}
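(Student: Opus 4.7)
My plan is to handle the three parts in order, with part~\eqref{thm:psi-faktor.a} carrying essentially all of the algebraic weight; parts \eqref{thm:psi-faktor.b} and \eqref{thm:psi-faktor.c} will then follow quickly from it, from Theorem~\ref{thm:psi}, and from how the $F$-linear map $\Cl(\psi)$ interacts with $F^\times$-cosets.

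For part~\eqref{thm:psi-faktor.a}, my first observation is that an $F$-subspace $\vH$ contained in the set-theoretic union $\Cl_0\VQ\cup\Cl_1\VQ$ is forced to sit inside one of the two summands, so every $\vh\in\vH$ is homogeneous of one common degree $d\in\{0,1\}$. Applying \eqref{eq:Clpsi-zwei} element-wise to the homogeneous pair $(\vm,\vh)$ yields
\begin{equation*}
    \Cl(\psi)(\vm\vh) = c^{-\partial\vm\cdot d}\,\Cl(\psi)(\vm)\cdot\Cl(\psi)(\vh) = \Cl(\psi)(\vm)\cdot\Cl(\psi)\bigl(c^{-\partial\vm\cdot d}\vh\bigr).
\end{equation*}
Since $c^{-\partial\vm\cdot d}\in F^\times$ and $\vH$ is an $F$-subspace, scaling by this constant permutes $\vH$, so the rightmost factor sweeps out exactly $\Cl(\psi)(\vH)$ as $\vh$ varies; this delivers the first equation in \eqref{eq:mH}, and the second one goes through symmetrically.

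For part~\eqref{thm:psi-faktor.b}, the $F$-linearity of $\Cl(\psi)$ immediately gives $\Cl(\psi)(F^\times\vp)=F^\times\Cl(\psi)(\vp)$, so the assignment is well defined and injective on $F^\times$-cosets. For multiplicativity I once more invoke \eqref{eq:Clpsi-zwei}: all generators of $\Lip\VQ$ listed in Section~\ref{se:lipschitz} are homogeneous, hence so is every $\vp\in\Lip\VQ$, and the discrepancy scalar $c^{-\partial\vp\partial\vq}$ drops out modulo $F^\times$. Surjectivity onto each of the three target structures is handed to me directly by Theorem~\ref{thm:psi}~\eqref{thm:psi.b}, and injectivity has already been noted.

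For part~\eqref{thm:psi-faktor.c}, the work amounts to a rephrasing of Theorem~\ref{thm:psi}~\eqref{thm:psi.c}: equivalence of the two twisted adjoint representations is, by definition, the intertwining identity $\psi\circ\xi_\vm = \tilde\xi_{\Cl(\psi)(\vm)}\circ\psi$ for every $\vm\in\Lip^\times\VQ$, which is exactly what Theorem~\ref{thm:psi}~\eqref{thm:psi.c} provides and which depends only on the $F^\times$-cosets of $\vm$ and $\Cl(\psi)(\vm)$. The only non-trivial point I expect to encounter is the careful bookkeeping of the scalars $c^{-q}$ in part~\eqref{thm:psi-faktor.a}; once they are absorbed into the subspace $\vH$, and later into the $F^\times$-cosets in part~\eqref{thm:psi-faktor.b}, everything else is formal.
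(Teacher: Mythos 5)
Your proposal is correct and follows essentially the route the paper intends: its proof of this theorem is just the remark that everything is immediate from \eqref{eq:Clpsi-zwei}, \eqref{eq:Clpsi-inv} and Theorem~\ref{thm:psi}, and your argument fleshes out exactly those ingredients (homogeneity of the elements involved, absorption of the scalar $c^{-q}$ into the subspace $\vH$ resp.\ into the $F^\times$-cosets, surjectivity from Theorem~\ref{thm:psi}~\eqref{thm:psi.b}, and the intertwining identity of Theorem~\ref{thm:psi}~\eqref{thm:psi.c}). The only point worth making explicit is that injectivity on cosets in part~\eqref{thm:psi-faktor.b} also uses that $\Cl(\psi)$ is a linear bijection (via $\Cl(\psi^{-1})=\Cl(\psi)^{-1}$), which the paper has already established.
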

\begin{proof}
The assertions are immediate from \eqref{eq:Clpsi-zwei}, \eqref{eq:Clpsi-inv}
and Theorem \ref{thm:psi}.
\end{proof}

\begin{rem}\label{rem:M+G-tilde}
Let us briefly sketch how to rephrase
Theorem~\ref{thm:psi-faktor}~\eqref{thm:psi-faktor.b} and
\eqref{thm:psi-faktor.c} in terms of the projective spaces
$\bP\bigl(\ClVQ\bigr)$ and $\bP\bigl(\Cl\VQtilde\bigr)$. The Clifford extension
$\Cl(\psi)$ of the given similarity $\psi$ gives rise to a bijection
$\cM\VQ\to\cM\VQtilde$ and it also yields an isomorphism linking the groups
$\cH\VQ$ and $\cH\VQtilde$. Consequently, it determines an isomorphism of the
groups $\cG\VQ$ and $\cG\VQtilde$ as well as their actions on $\bP\VQ$ and
$\bP\VQtilde$, respectively. Therefore, $\Cl(\psi)$ establishes also an
isomorphism between the kernels of these group actions.
\end{rem}

\begin{rem}\label{rem:quadrat}
We still adhere to the hypotheses of Theorem~\ref{thm:psi}. Moreover, we
require $c$ to be a square in $F$. Upon choosing any square root of $c^{-1}$,
say $\sqrt{c^{-1}}$, the following applies. The mapping
$\omega:=\sqrt{c^{-1}}\,\psi$ is an isometry of $\VQ$ onto $\VQtilde$. By the
universal property of Clifford algebras, $\omega$ extends to a unique
isomorphism of algebras $\ClVQ\to\Cl\VQtilde$, which is easily seen to coincide
with $\Cl(\omega)$. Also, we have $\Cl(\omega)=\Cl_0(\psi)\oplus
\sqrt{c^{-1}}\,\Cl_1(\psi)$, whence the isomorphism $\Cl(\omega)$ allows for
alternative proofs of our previous results.\footnote{If $c$ fails to be a
square in $F$, then this can be carried out by going over to metric vector
spaces over an appropriate quadratic extension of $F$.}
\end{rem}

We now switch back to our earlier point of view. Given $\VQ$ and $c\in
F^\times$ we consider $\Cl(\vV,Q,\odot_c)$ as Clifford algebra of $(\vV,cQ)$
with $\odot_c$ being subject to \eqref{eq:c.mult-zwei}. From \eqref{eq:Clpsi},
the even subalgebras of $\ClVQ$ and $\Cl(\vV,Q,\odot_c)$ coincide (as
algebras), as do their odd parts (as vector spaces). Our quest for going over
to the projective space on $\ClVQ$ comes from an observation resulting from
\eqref{eq:mH}: for all homogeneous elements $\vp,\vq\in\ClVQ$, we have
$F(\vp\vq)=F(\vp\odot_c\vq)$ despite the fact that their products $\vp\vq$ and
$\vp\odot_c\vq$ need not coincide. From \eqref{eq:Clpsi-alpha},
Theorem~\ref{thm:psi}, Theorem~\ref{thm:psi-faktor} and
Remark~\ref{rem:M+G-tilde} we readily obtain:

\begin{cor}\label{cor:M+G}
Let $\VQ$ be a metric vector space and let $c\in F^\times$. The following
notions arising from the Clifford algebra $\ClVQ$ do not alter when going over
to the algebra $\Cl(\vV,Q,\odot_c)$:
\begin{enumerate}
    \item\label{cor:M+G.a} The canonical filtration of\/ $\ClVQ$;
    \item\label{cor:M+G.b} for any homogeneous $\vm\in\Cl^\times\VQ$, the
        canonical action of the left translation $\lambda_\vm$
        \emph{(}right translation $\rho_\vm$\emph{)} on the union of the
        projective spaces\/ $\bP\bigl(\Cl_0\VQ\bigr)$ and\/
        $\bP\bigl(\Cl_1\VQ\bigr)$;
        \item\label{cor:M+G.e} the canonical action of the reversal
            $\alpha$ on the union of the projective spaces\/
            $\bP\bigl(\Cl_0\VQ\bigr)$ and\/ $\bP\bigl(\Cl_1\VQ\bigr)$;
    \item\label{cor:M+G.c} the group\/ $\cH\VQ\cong \bigl(\Cl_0^\times\VQ
        \cup \Cl_1^\times\VQ\bigr)/F^\times$ as in\/ \eqref{eq:H};
    \item\label{cor:M+G.d} the point set\/ $\cM\VQ$ arising from the
        Lipschitz monoid\/ $\Lip\VQ$ according to\/ \eqref{eq:M} and the
        group\/ $\cG\VQ\cong \Lip^\times\VQ / F^\times$ as in\/
        \eqref{eq:G};

    \item\label{cor:M+G.f} the action of the group $\cG\VQ$ on the
        projective space\/ $\bP\VQ$ as in\/ \eqref{eq:G-action}.
\end{enumerate}
\end{cor}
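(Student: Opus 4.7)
The plan is to apply Theorem~\ref{thm:psi}, Theorem~\ref{thm:psi-faktor} and Remark~\ref{rem:M+G-tilde} to the similarity $\psi=\id_\vV\colon\VQ\to(\vV,cQ)$ of ratio $c$. The Clifford extension $\Cl(\id_\vV)\colon\ClVQ\to\Cl(\vV,cQ)$ is precisely the linear bijection from \eqref{eq:Clid} used to transport the algebra structure of $\Cl(\vV,cQ)$ onto the underlying vector space of $\ClVQ$, yielding $\Cl(\vV,Q,\odot_c)$. Under this identification, the various bijections and isomorphisms produced by the quoted results collapse to equalities of subsets, filtrations, groups and group actions all defined on the common vector space $\ClVQ$.

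The computational backbone is the identity
\begin{equation*}
    \vm\odot_c\vn = c^{\partial\vm\partial\vn}\vm\vn \qquad \text{for homogeneous } \vm,\vn\in\ClVQ,
\end{equation*}
obtained from \eqref{eq:Clpsi-zwei} with $\psi=\id_\vV$ read through the pullback. Thus $\odot_c$ agrees with $\cdot$ as soon as one of the factors is even, and differs only by the scalar $c$ when both factors are odd. From this one reads off~\eqref{cor:M+G.a}, since a $k$-fold $\odot_c$-product of vectors equals $c^{\lfloor k/2\rfloor}$ times the corresponding $\cdot$-product and the two filtrations therefore coincide. Similarly, for homogeneous $\vm\in\Cl^\times\VQ$ and $\vz\in\Cl_0\VQ\cup\Cl_1\VQ$ one has $\vm\odot_c\vz=c^{\partial\vm\partial\vz}\vm\vz$ and analogously from the right, so the $\odot_c$-translations by $\vm$ differ from $\lambda_\vm,\rho_\vm$ only by a nonzero scalar on each of $\Cl_0\VQ$ and $\Cl_1\VQ$; this settles~\eqref{cor:M+G.b}. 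For~\eqref{cor:M+G.e}, the same identity shows that $\alpha$ remains an antiendomorphism of $(\ClVQ,\odot_c)$ fixing $\vV$ pointwise, hence coincides with the reversal of $\Cl(\vV,Q,\odot_c)$ by uniqueness; alternatively one may invoke \eqref{eq:Clpsi-alpha} directly.

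The group-theoretic items now follow by combining the above with Theorems~\ref{thm:psi} and~\ref{thm:psi-faktor}. For~\eqref{cor:M+G.c}, a homogeneous element of $\Cl_0\VQ\cup\Cl_1\VQ$ is $\cdot$-invertible if and only if it is $\odot_c$-invertible, since the two products coincide on $\Cl_0\VQ$ and are proportional on $\Cl_1\VQ$; on the projective level, $F(\vp\vq)=F(\vp\odot_c\vq)$ for homogeneous $\vp,\vq$ guarantees that the group structure on $\cH\VQ$ is unchanged. For~\eqref{cor:M+G.d}, Theorem~\ref{thm:psi}~\eqref{thm:psi.b} applied to $\psi=\id_\vV$ gives $\Lip\VQ=\Lip(\vV,Q,\odot_c)$ as subsets of the underlying vector space, which entails the preservation of $\cM\VQ$ and, jointly with~\eqref{cor:M+G.c}, of $\cG\VQ$ as a group. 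Finally, \eqref{cor:M+G.f} is an immediate translation of Theorem~\ref{thm:psi}~\eqref{thm:psi.c} for $\psi=\id_\vV$: the twisted action on $\vV$, and hence on $\bP\VQ$, coincides in both algebras.

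The only real obstacle is careful bookkeeping, namely distinguishing the literal product $\vm\vn$ from the transported product $\vm\odot_c\vn$ and tracking the scalar factor $c^{\partial\vm\partial\vn}$ through each statement. On the projective level, and after Theorem~\ref{thm:psi-faktor} has done the heavy algebraic lifting, such scalars become invisible, so every item of the corollary reduces to a short consequence of the preceding results.
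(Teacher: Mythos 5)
Your proposal is correct and follows essentially the same route as the paper: apply Theorems~\ref{thm:psi} and~\ref{thm:psi-faktor}, Remark~\ref{rem:M+G-tilde} and \eqref{eq:Clpsi-alpha} to the similarity $\id_\vV\colon\VQ\to(\vV,cQ)$ of ratio $c$, using the pulled-back relation $\vm\odot_c\vn=c^{\partial\vm\partial\vn}\vm\vn$ (the analogue of \eqref{eq:Clpsi-zwei}, consistent with \eqref{eq:c.mult-zwei}) so that $F(\vp\vq)=F(\vp\odot_c\vq)$ renders all scalar discrepancies invisible on the projective level. Your explicit verifications (filtration, invertibility, reversal, Lipschitz sets, twisted action) just spell out what the paper subsumes in the sentence preceding the corollary, so there is no gap.
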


\section{Future research}\label{se:future}

We are of the opinion that a closer look at low-dimensional examples should
prove worthwhile. The first interesting class of examples are projective metric
planes ($\dim\vV=3$), since they appear in the theory of \emph{absolute
planes}; see \cite{bach-73a}, \cite{bach-89a}, \cite[Ch.~III]{karz+k-88a},
\cite[3.4.1]{klaw-15a}, \cite[6.1]{klaw+h-13a}, \cite{klop-85a},
\cite{moln-18a}, \cite[4.6]{schroe-95a} and the references therein; furthermore
also \emph{finite Bolyai-Lobachevsky} planes show up here \cite{korch+s-12a},
\cite{korch+s-14a}. In all these examples, the corresponding even Clifford
algebra is a \emph{quaternion algebra} \cite{gross+l-09a}, \cite{voig-11a}. In
particular, an \emph{elliptic plane} gives rise to a \emph{quaternion division
algebra}. Ultimately, one is lead to the following question: to which extent
does the general theory of \emph{kinematic spaces} (including the theory of
\emph{Clifford parallelism}) overlap with our findings as sketched in
Remark~\ref{rem:compat}. We refer, among others, to \cite{broe-73a},
\cite{havl+p+p-21b}, \cite{karz-73a} and \cite{paso-10a}. Going up one
dimension ($\dim\vV=4)$, one finds $\cM_0\VQ$ and $\cM_1\VQ$ as siblings of the
classical \emph{Study quadric} (see \cite{study-13a}) in a projective space of
dimension $7$; also here there are many results scattered over the literature;
see \cite[p.~463]{helm-11a}, \cite[3.4.2]{klaw-15a}, \cite[6.2]{klaw+h-13a} and
\cite[Ch.~11]{seli-05a}.
\par
Another step, still to be taken in a general context, is the inclusion of
\emph{affine metric geometry}. Over the real numbers this task has been
accomplished quite a while ago and leads to what is called a \emph{homogeneous
model}. Related work can be read off from \cite[3.4.2]{klaw-15a},
\cite[6.2]{klaw+h-13a}, \cite{gunn-17a} and \cite{gunn-17b}. However, the
approach used there relies on the \emph{signature} of a real quadratic form, a
notion which is no longer available over an arbitrary field.
\par
Last, but not least, also the general theory should allow for amplification.
The results in \cite{schroe-73a}, where \emph{points and planes} of a
three-dimensional projective space are used to represent motions of metric
planes, suggest to investigate under which conditions the subspaces $\Cl_0\VQ$
and $\Cl_1\VQ$ of the Clifford algebra $\ClVQ$ can be made into a \emph{dual
pair of vector spaces} in some meaningful way.

\small

\newcommand{\Dbar}{\makebox[0cm][c]{\hspace{-2.5ex}\raisebox{0.25ex}{-}}}\newcommand{\cprime}{$'$}

\noindent
Hans Havlicek\\
Institut f\"{u}r Diskrete Mathematik und Geometrie\\
Technische Universit\"{a}t Wien\\
Wiedner Hauptstra{\ss}e 8--10/104\\
1040 Wien\\
Austria\\
\texttt{havlicek@geometrie.tuwien.ac.at}
\end{document}